\theoremstyle{plain}
\newtheorem{theorem}{Theorem}[section]
\newtheorem{lemma}[theorem]{Lemma}
\newtheorem{corollary}[theorem]{Corollary}
\theoremstyle{definition}
\theoremstyle{remark}
\newtheorem{remark}[theorem]{Remark}
\def\bin #1#2 {\left( \matrix { #1 \cr #2 \cr } \right) }
\begin{document}

\title[On the genus of  projective curves]
{On the genus of  projective curves not contained in hypersurfaces
of given degree, II}

\author{Vincenzo Di Gennaro }
\address{Universit\`a di Roma \lq\lq Tor Vergata\rq\rq, Dipartimento di Matematica,
Via della Ricerca Scientifica, 00133 Roma, Italy.}
\email{digennar@mat.uniroma2.it}

\author{Giambattista Marini }
\address{Universit\`a di Roma \lq\lq Tor Vergata\rq\rq, Dipartimento di Matematica,
Via della Ricerca Scientifica, 00133 Roma, Italy.}
\email{marini@mat.uniroma2.it}

\abstract Fix integers $r\geq 4$ and $i\geq 2$. Let $C$ be a
non-degenerate, reduced and irreducible complex projective curve in
$\mathbb P^r$, of degree $d$, not contained in a hypersurface of
degree $\leq i$. Let $p_a(C)$ be the arithmetic genus of $C$.
Continuing previous research, under the assumption $d\gg
\max\{r,i\}$, in the present paper we exhibit a Castelnuovo  bound
$G_0(r;d,i)$ for $p_a(C)$. In general, we do not know whether this
bound is sharp. However, we are able to prove it is sharp when
$i=2$, $r=6$ and $d\equiv 0,3,6$ (mod $9$). Moreover, when $i=2$,
$r\geq 9$, $r$ is divisible by $3$, and $d\equiv 0$ (mod
$r(r+3)/6$), we prove that if $G_0(r;d,i)$ is not sharp, then for
the maximal value of $p_a(C)$ there are only three possibilities.
The case in which $i=2$ and $r$ is not divisible by $3$ has already
been examined in the literature. We give some information on the
extremal curves.

\bigskip\noindent {\it{Keywords}}: Projective curves. Castelnuovo-Halphen theory. Quadric hypersurfaces.
Projection of a rational normal scroll surface. Veronese surface. Maximal rank.

\medskip\noindent {\it{MSC2010}}\,: Primary 14N15. Secondary 14N25, 14M05, 14J26, 14J70.

\endabstract
\maketitle

\section*{Acknowledgments}
The authors acknowledge the MIUR Excellence Department Project awarded to the Department of Mathematics, University of Rome Tor
Vergata.

\section{Introduction}

Let $C$ be a non-degenerate, reduced and irreducible complex
projective curve in $\mathbb P^r$ ($r\geq 3$), of degree $d$. A
classical result of Castelnuovo's states that the arithmetic genus
$p_a(C)$ of $C$ satisfies the following bound:
\begin{equation}\label{GC}
p_a(C)\leq G(r; d):=\binom{m_1}{2}(r-1)+m_1\epsilon_1,
\end{equation}
where $m_1$ and $\epsilon_1$ are defined by dividing
$d-1=m_1(r-1)+\epsilon_1$, $0\leq \epsilon_1\leq r-2$. The bound
$G(r; d)$ is sharp. The extremal curves, also called {\it
Castelnuovo curves}, are contained in {\it quadric hypersurfaces}.
When $d>2r$, the intersection of all quadrics containing a given
Castelnuovo curve is a rational normal surface of degree $r-1$ in
$\mathbb P^r$. By inspection of curves on such surfaces, one
produces all Castelnuovo curves \cite[pp. 81-93]{EH}, \cite[p.
703]{CCD2}, \cite[p. 44]{H}. This theory can be generalized in
several ways. We refer to \cite{CCD2}, \cite{DGnnew}, and the
references therein, for an overview on this topic.

\smallskip
In the present paper we turn to the following {\lq\lq natural\rq\rq}
generalization: {\it fixed integers $r,d,i$, with $r\geq 4$, $i\geq
2$, $d\gg\max\{r,i\}$, find the maximal arithmetic genus
\begin{equation}\label{G}
G(r;d,i)
\end{equation}
for an integral curve $C$ of degree $d$ in $\mathbb P^r$, not
contained in a hypersurface of degree $\leq i$} \cite[p. 229, line
18-23 from above]{CCD}. This problem has remained completely open
until the recent papers \cite{DGnew}, \cite{DGnnew}. We think it
will be useful briefly summarize the results appearing in these
papers.

\smallskip
Divide:
\begin{equation}\label{f1}
\binom{r+i}{i}-(i+1)=\alpha\binom{i+1}{2}+\beta, \quad 0\leq
\beta\leq \binom{i+1}{2}-1.
\end{equation}
Set:
\begin{equation}\label{f2}
s_0=s_0(r,i):=\begin{cases} \alpha \quad{\text{if $\beta=0$}}\\
\alpha+1 \quad{\text{if $\beta>0$.}}
\end{cases}
\end{equation}
For every integer $d\geq 1$ define $m$ and $\epsilon$ by dividing
\begin{equation}\label{f4}
d-1=ms_0+\epsilon, \quad 0\leq \epsilon\leq s_0-1.
\end{equation}
The number $s_0$ is the {\it expected} minimal degree for an
integral surface $S\subset \mathbb P^r$ not contained in
hypersurfaces of degree $i$ (see Remark \ref{r1}, $(i)$, below).

\smallskip
The quoted papers \cite{DGnew}, \cite{DGnnew}  {\it deal only with
the case $\beta=0$} (for instance, when $i=2$, $\beta=0$ if and only
if $3$ does not divide $r$).

\smallskip
When $\beta=0$, sharp results are presented for $r=4$ and $i=2$, and
for $r\geq 5$ and $2\leq i\leq 3$. Moreover, when $r\geq 5$ and
$i\geq 2$, in \cite{DGnnew} one proves that, for a curve
$C\subset\mathbb P^r$ not contained in hypersurfaces of degree $\leq
i$, and of  degree $d\gg\max\{r,i\}$, one has:
\begin{equation}\label{f3}
p_a(C)\leq \binom{m}{2}s_0+m\epsilon, \end{equation}
where $m$ and $\epsilon$ are defined in (\ref{f4}).
By a numerical argument based on \cite[Main theorem]{CCD}, one reduces
the proof of the bound (\ref{f3}) to curves lying on surfaces
$S\subset \mathbb P^r$ of degree $s_0$. Next,  one applies \cite[Lemma
2.1]{DGnnew}, which states the following property. Even if it is
easy to prove, in our opinion it is a little surprising.

\smallskip
{\it If $S\subset \mathbb P^r$ ($r\geq 5$) is an integral surface of
degree $s_0$ (with $\beta=0$), not contained in hypersurfaces of
degree $\leq i$, then $h^0(S,\mathcal O_S(1))=s_0+2$, i.e. $S$ is an
isomorphic projection of a rational normal scroll $S'\subset
\mathbb P^{s_0+1}$} ($S'$ is a fortiori smooth).

\smallskip
Therefore, {\it if such a surface $S$ exists}, then the bound
(\ref{f3}) is sharp and the extremal curves are the Castelnuovo
curves coming from $S'$ by projection. Under this assumption (i.e.
the existence of $S$) a curious fact  follows, i.e. {\it the sharp
bound $G(d;r,i)$ previously defined in (\ref{G}) is equal to the
Castelnuovo bound $G(s_0+1;d)$ of the curves in $\mathbb P^{s_0+1}$
of degree $d$} (compare with (\ref{GC}) and (\ref{f3})):
\begin{equation}\label{curious}
G(r;d,i)=G(s_0+1;d).
\end{equation}
As for the question of the existence of the surface $S$, i.e. of the
effective sharpness of the bound (\ref{f3}), it reduces to the
following: {\it is it possible to project isomorphically a rational
normal scroll $S'\subset \mathbb P^{s_0+1}$ in a surface
$S\subset\mathbb P^r$ not contained in hypersurfaces of degree $\leq
i$?} By \cite[Theorem 2]{BE} the answer is {\it yes} when $2\leq
i\leq 3$. More precisely, by \cite[Theorem 2]{BE} one knows that a
general projection $S$ of $S'$
is of {\it maximal rank} under the assumption
$$
s_0>r\geq 5, \quad 6s_0+4\leq \binom{r+3}{3}.
$$
Since this condition is satisfied when $2\leq i\leq 3$, and
$h^0(S',\mathcal O_{S'}(i))=s_0\binom{i+1}{2}+(i+1)=\binom{r+i}{i}$,
it easily follows that $h^0(\mathbb P^r, \mathcal I_S(i))=0$ when
$2\leq i\leq 3$. We do not know whether the bound (\ref{f3}) is
sharp when $i\geq 4$. We stress the fact that, being $S'$ a rational
normal scroll, one knows the curves contained in it (see e.g.
\cite{EH}). In particular, one knows the existence on $S'$ of curves
of degree $d$ for {\it every} $\epsilon$ (compare with (\ref{f4})).

\smallskip
{\it In the present paper we  extend some of the previous results in
the case $\beta>0$ (compare with (\ref{f1})).}

\smallskip\noindent
When $i=2$, this is equivalent to say that $3$ divides $r$. This
case escapes to the analysis of \cite{DGnew}, \cite{DGnnew}
previously described. The case $\beta>0$ appears much more
complicated. In fact, in the range $\beta>0$, the argument which
leads to the proof of the quoted lemma \cite[Lemma 2.1]{DGnnew}
produces only two inequalities. If $S\subset \mathbb P^r$ is an
integral surface of degree $s_0$ not contained in hypersurfaces of
degree $\leq i$, then
\begin{equation}\label{in}
s_0-\pi+2-i(c_0-\pi)-\gamma \leq h^0(S,\mathcal O_S(1))\leq
s_0-\pi+2,
\end{equation}
where $\pi$ is the sectional genus of $S$, and $c_0$ and $\gamma$
are defined by dividing:
\begin{equation}\label{stiman}
\binom{i+1}{2}-\beta=c_0i+\gamma, \quad 0\leq \gamma\leq i-1
\end{equation}
(see Lemma \ref{stima} below; observe that $\pi\leq c_0$). Despite that, again by a numerical
argument (relying on the assumption $d\gg\max\{r,i\}$ and the main
results in \cite{CCD} and \cite{PAMS}), we are able to reduce the
problem of estimating $G(r;d,i)$ to the study of curves on a {\it
surface $S\subset\mathbb P^r$ of degree $s_0$, not contained in
hypersurfaces of degree $\leq i$, and with sectional genus
$\pi=c_0$} (see the proof of Theorem \ref{lastd}). In this case, the
inequalities (\ref{in}) say that $S$, if existing, is an isomorphic
projection of a surface $S'\subset \mathbb P^R$, with
$$s_0-c_0+1-\gamma \leq R \leq s_0-c_0+1.$$ It follows that the degree
$s_0$ of $S'$ in $\mathbb P^R$ is {\it low with respect to $R$} (see
Appendix below; when $r=4$ we need to assume $i\geq 4$ and $i\neq
15$):
$$
s_0\leq 2R-4.
$$
Therefore, we may apply  \cite[Theorem 2.2]{DF}, and  deduce the following bound
(compare with
(\ref{G}) and Remark \ref{r1}, $(iii)$):
\begin{equation}\label{G_0}
G(r;d,i)\leq  G_0(r;d,i),
\end{equation}
\begin{equation*}
G_0(r;d,i):=\binom{m}{2}s_0+m(\epsilon + c_0)+\binom{\gamma
+1}{2}+\max\left\{0,\,
\left\lfloor\frac{2c_0-(s_0-1-\epsilon)}{2}\right\rfloor\right\}-\mu,
\end{equation*}
where $\mu=0$ when $\gamma\leq 1$, and $\mu=1$ otherwise (Theorem
\ref{lastd} below). The numbers $s_0$, $m$, and $\epsilon$ are
defined in (\ref{f1}), (\ref{f2}) and (\ref{f4}), $c_0$ and $\gamma$
in (\ref{stiman}).

\smallskip
We are able to prove that the bound $G_0(r;d,i)$ is sharp for $i=2$,
$r=6$, and $\epsilon\in\{2,5,8\}$ (Theorem \ref{r=6}). When $i=2$,
$r\geq 9$, $r$ is divisible by $3$, and $\epsilon=s_0-1$, we prove
that if the bound $G_0(r;d,2)$ is not sharp, then for the sharp
value $G(r;d,2)$ there are only three possibilities (see Theorem
\ref{r9}, and Remark \ref{finale}, $(ii)$, for some comments on
other values of $\epsilon$). The proof of Theorem \ref{r=6} and
Theorem \ref{r9} relies on \cite[Theorem 3 and Theorem 2]{BE}, which
guarantees the existence of surfaces of degree $s_0$ with
$h^0(\mathbb P^r,\mathcal I_S(2))=0$. This implies that, for {\it
every} $0\leq \epsilon\leq s_0-1$, the extremal curves of genus
$G(r;d,2)$ are necessarily contained in surfaces $S\subset \mathbb
P^r$ of degree $s_0$ not contained in quadrics (Remark \ref{finale},
$(i)$).

\smallskip
When $i\geq 3$ (and $\beta>0$), the numerical assumptions in
\cite[loc. cit.]{BE} fail. Therefore, in the  case $i\geq 3$ we do
not know whether the bound $G_0(r;d,i)$ is sharp. However, when
$0\leq \gamma\leq 1$, we can say that if the bound $G_0(r;d,i)$ is
sharp, then an extremal curve lies   on a  surface $S\subset\mathbb
P^r$ of degree $s_0$, sectional genus $\pi=c_0$, arithmetic genus
$p_a(S)=-\gamma$, with $h^0(\mathbb P^r,\mathcal I_S(i))=0$.
Conversely, if there exists such a surface, at least when $\gamma=0$
and $\epsilon=s_0-1$, the bound $G_0(r;d,i)$ is sharp (Remark
\ref{r2}, $(i)$ and $(ii)$).

\smallskip
The question of the sharpness is divided into the following two
steps: 1) to establish whether there are surfaces $S$ of degree
$s_0$ in $\mathbb P^r$ with sectional genus $\pi=c_0$ and
$h^0(\mathbb P^r,\mathcal I_S(i))=0$, and which ones are they; 2) to
study the curves of maximal genus for the corresponding surface
$S'\subset\mathbb P^R$ of which $S$ is an isomorphic projection.
Recall that $S'$ is a surface of low degree in $\mathbb P^R$, in
fact $s_0\leq 2R-4$, with $s_0-c_0+1-\gamma \leq R \leq s_0-c_0+1$.
The simplest case seems to be the case $\gamma=0$ (for $\beta>0$, this happens
when $i=2$ and $3$ divides $r$, or when $i=3$ and $r=0,\,20,\,28$ (mod $36$)).
When $\gamma=0$, then a fortiori
$R=s_0-c_0+1$, and the surface $S'\subset \mathbb P^R$ is a {\it
Castelnuovo surface}, i.e. its sectional genus $\pi=c_0$ is equal to
the Castelnuovo's bound for curves of degree $s_0$ in $\mathbb
P^{R-1}$ (when $\pi=c_0=1$, such a surface is called {\it of almost
minimal degree} \cite{BS}). On such surfaces $S'\subset \mathbb P^R$
(at least when $\epsilon=s_0-1$) one is able to identify extremal
curves. In fact, these curves are {\it the extremal curves in
$\mathbb P^R$ with respect to the condition of being not contained
in surfaces of degree $<s_0$}, and are studied in \cite{EH} and
\cite{CCD}. If $\epsilon=s_0-1$, then they are exactly the complete
intersections of $S'$ with a hypersurface of degree $m+1$. In these
papers, in the general case $0\leq \epsilon\leq s_0-1$, the extremal
curves are exhibited mostly on cones, which cannot be projected in
an isomorphic way. This is another complication, because we do not
know whether on a {\it fixed} Castelnuovo surface there exist
extremal curves in the sense of \cite{EH} and \cite{CCD} for every
$0\leq \epsilon\leq s_0-1$.

Taking into account that if $\gamma=0$, then $h^0(S',\mathcal
O_{S'}(i))=\binom{r+i}{i}=h^0(\mathbb P^r,\mathcal O_{\mathbb
P^r}(i))$ (see (\ref{gbe}) below), previous discussion leads to the
question: {\it when $\gamma=0$, is it possible to project a
Castelnuovo surface $S'\subset \mathbb P^R$ isomorphically in a
surface $S\subset\mathbb P^r$ of maximal rank?} (see Remark
\ref{r2}, $(i)$ and $(ii)$). If this is possible, at least when
$\epsilon=s_0-1$, then $G_0(r;d,i)$ would be sharp, and equal to the
bound $\pi_{s_0-R+1}(d,R)$ appearing in \cite[p. 116]{EH}, which in
turn is equal to the bound $G(d,s_0,R)$ determined in \cite{CCD}.
This would be a generalization of (\ref{curious}), because a
rational normal scroll surface is a particular case of a Castelnuovo
surface.

\smallskip
Another interesting case would seem to be the case $c_0=0$ (e.g. $i=3$ and $r=5$ (mod $36$)).
In this case, the surfaces involved are those with sectional genus $c_0=0$.

\smallskip
In Remark \ref{r2}, $(iv)$, we make explicit the assumption
$d\gg\max\{r,i\}$. It is certainly not the best possible. One might
hope to do better with a closer careful and complicate examination
of the numerical functions arising in the proofs.  We decided not to
push here this investigation further.

\smallskip
We will keep the notations introduced in (\ref{G}), (\ref{f1}), (\ref{f2}),
(\ref{f4}), (\ref{in}), (\ref{stiman})  throughout the
paper. Observe that once the numbers $r,d,i$ are set,
the numbers $s_0$, $\beta$, $m$, $\epsilon$, $c_0$ and $\gamma$  are determined.

\smallskip
All the results presented, with the exception of the Lemma \ref{Clifford} and Corollary \ref{suff},
concern the case $\beta>0$.

\section{Preliminary results}

The following Lemma \ref{Clifford} is certainly well known. We prove
it for lack of a suitable reference. In Remark \ref{r1}, $(iii)$, we
recall the definition of the symbols  $\lceil \frac{x}{2}\rceil$ and
$\lfloor \frac{x}{2} \rfloor$ for an integer $x\geq 1$.

\smallskip
\begin{lemma}\label{Clifford} Let $\Sigma\subset \mathbb P^{r-1}$, $r\geq 2$, be an integral projective
curve of degree $s$ and arithmetic genus $\pi$. Then, for every
$j\geq 1$, one has:
$$
h^0(\Sigma,\mathcal O_{\Sigma}(j))\,\leq \,1+js-\min\left\{\pi,
\left\lceil \frac{js}{2}\right\rceil\right\}.
$$
\end{lemma}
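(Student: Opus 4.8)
The plan is to reduce everything to Riemann--Roch on $\Sigma$, Serre duality, and Castelnuovo's bound (\ref{GC}). Set $L=\mathcal O_\Sigma(j)$; this is a line bundle of degree $n:=js$ on the integral (hence Cohen--Macaulay) curve $\Sigma$, and it is very ample, being a tensor power of the restriction of $\mathcal O_{\mathbb P^{r-1}}(1)$. Riemann--Roch on $\Sigma$ gives $h^0(\Sigma,L)=n+1-\pi+h^1(\Sigma,L)$, so the asserted inequality is equivalent to
$$h^1(\Sigma,L)\ \le\ \max\{\,0,\ \pi-\lceil js/2\rceil\,\}.$$
First I would treat the range $js\ge 2\pi-1$, i.e. $\pi\le\lceil js/2\rceil$, in which the right-hand side is $0$. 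By Serre duality $h^1(\Sigma,L)=h^0(\Sigma,\,\omega_\Sigma\otimes L^{-1})$, and $\omega_\Sigma\otimes L^{-1}$ is a rank-one torsion-free sheaf of degree $2\pi-2-js\le-1$; a nonzero section of such a sheaf would embed $\mathcal O_\Sigma$ into it and force its degree to be $\ge 0$, a contradiction. Hence $h^1(\Sigma,L)=0$ in this range.

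There remains the range $js\le 2\pi-2$, where (using Riemann--Roch again) the claim becomes the Clifford-type inequality $h^0(\Sigma,L)\le 1+\lfloor js/2\rfloor$. Here I would use very ampleness of $L$: the complete linear system $|L|$ re-embeds $\Sigma$ as a \emph{non-degenerate} integral curve $\Sigma'\subset\mathbb P^{M}$, where $M=h^0(\Sigma,L)-1$, of degree $n=js$ and arithmetic genus $\pi$ (the arithmetic genus being intrinsic). Castelnuovo's bound (\ref{GC}) then gives $\pi\le G(M;n)$ (for $M=2$ this is the plane-curve formula $\pi=\binom{n-1}{2}$). I would argue by contradiction: suppose $M\ge\lfloor n/2\rfloor+1$ and divide $n-1=m_1(M-1)+\epsilon_1$, $0\le\epsilon_1\le M-2$. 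Non-degeneracy of $\Sigma'$ forces $n\ge M$, hence $m_1\ge 1$; and $M-1\ge\lfloor n/2\rfloor\ge(n-1)/2$ forces $m_1\le 2$. A short case check ($m_1=1$: $G(M;n)=n-M\le\lceil n/2\rceil-1\le\lfloor n/2\rfloor$; $m_1=2$: possible only for $n$ odd and $M=(n+1)/2$, where $G(M;n)=(n-1)/2=\lfloor n/2\rfloor$) yields $G(M;n)\le\lfloor n/2\rfloor$. This contradicts $\pi\le G(M;n)$ together with $\pi\ge\lfloor n/2\rfloor+1$, the latter following from $js\le 2\pi-2$. Therefore $M\le\lfloor n/2\rfloor$, i.e. $h^0(\Sigma,L)\le 1+\lfloor js/2\rfloor$.

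The step I expect to matter most is the reduction in the range $js\le 2\pi-2$: one is genuinely forced onto the Castelnuovo bound, because after re-embedding by $|\mathcal O_\Sigma(j)|$ the curve $\Sigma$ becomes a non-degenerate curve whose degree ($js$) and ambient dimension are tied together through its \emph{arithmetic} genus. A reduction to classical Clifford on the normalization $\tilde\Sigma$ does not suffice here: $\mathcal O_\Sigma(j)$ may well be special while its pullback to $\tilde\Sigma$ is not (this already occurs for suitable singular rational space curves), and then $g(\tilde\Sigma)$ is too small to give the bound. The remaining points are routine: that Serre duality and the degree of a rank-one torsion-free sheaf behave as expected on the Cohen--Macaulay curve $\Sigma$ (they do, and $L$ is locally free throughout), and the floor/ceiling bookkeeping in the two ranges.
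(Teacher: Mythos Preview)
Your proof is correct and, in the first range $\pi\le\lceil js/2\rceil$, identical to the paper's. In the second range you take a genuinely different route. The paper simply invokes Clifford's theorem for possibly singular curves \cite[Proposition 1.5]{Fujita}: if $h^1(\Sigma,\mathcal O_\Sigma(j))>0$ then $h^0\le 1+js/2$, and the case $h^1=0$ is immediate from Riemann--Roch. You instead re-embed $\Sigma$ by the complete linear system $|\mathcal O_\Sigma(j)|$ and feed the image into Castelnuovo's bound (\ref{GC}), which is already available in the paper. Your approach trades the external reference to Fujita for a short numerical case analysis on $m_1\in\{1,2\}$; the paper's is one line shorter but depends on a result not proved here. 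Both are perfectly valid, and your observation that passing to the normalization would not suffice (because specialness can be lost) is a nice justification for why some singular-curve input---Fujita's Clifford or, as you do, Castelnuovo for integral curves---is genuinely needed.

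One small point of presentation: your division $n-1=m_1(M-1)+\epsilon_1$ tacitly needs $M\ge 2$, and the appeal to (\ref{GC}) as stated needs $M\ge 3$. Under your contradiction hypothesis $M\ge\lfloor n/2\rfloor+1$ together with $n\ge M$, the cases $M\le 2$ force $n\le 3$; but in the range $js\le 2\pi-2$ one has $\pi\ge 2$, and an integral curve with $js\le 3$ has $\pi\le 1$, so these degenerate cases are vacuous. It would be worth saying this in one sentence so the reader sees that (\ref{GC}) is being applied only where it is stated.
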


\begin{proof}
First assume that $\pi\leq \left\lceil \frac{js}{2}\right\rceil$.

\medskip
In this case one has $2\pi-2-js<0$. It follows that $\deg
(\omega_{\Sigma}(-j))=2\pi-2-js<0$, where $\omega_{\Sigma}$ denotes
the dualizing sheaf \cite{Fujita}. Hence $h^1(\Sigma,\mathcal
O_{\Sigma}(j))=h^0(\Sigma,\omega_{\Sigma}(-j))=0$, and therefore
$1-\pi+js=h^0(\Sigma,\mathcal O_{\Sigma}(j))-h^1(\Sigma,\mathcal
O_{\Sigma}(j))=h^0(\Sigma,\mathcal O_{\Sigma}(j))$.

\medskip
Next assume $\pi > \left\lceil \frac{js}{2}\right\rceil$.

\medskip
If $h^1(\Sigma,\mathcal O_{\Sigma}(j))>0$, then, by Clifford's
Theorem for possibly singular curves \cite[Proposition 1.5]{Fujita},
we have $h^0(\Sigma,\mathcal O_{\Sigma}(j))\leq 1+\frac{js}{2}$.
Since $h^0(\Sigma,\mathcal O_{\Sigma}(j))$ is an integer,  it
follows that $h^0(\Sigma,\mathcal O_{\Sigma}(j))\leq 1+js-
\left\lceil \frac{js}{2}\right\rceil$. If $h^1(\Sigma,\mathcal
O_{\Sigma}(j))=0$, then $h^0(\Sigma,\mathcal O_{\Sigma}(j))=1-\pi+js
< 1+js- \left\lceil \frac{js}{2}\right\rceil$.
\end{proof}

\bigskip

\begin{corollary}\label{suff}
Let $S\subset \mathbb P^{r}$, $r\geq 2$, be a projective, integral
surface of degree $s$ and sectional genus $\pi$. Then, for every
$i\geq 1$, one has
\begin{equation}\label{i1}
h^0(\mathbb P^r,\mathcal I_S(i))\geq
\binom{r+i}{i}-\left[(i+1)+\binom{i+1}{2}s\right]+\sum_{j=1}^{i}c(j),
\end{equation}
where, for every $j\geq 1$, we set:
\begin{equation}\label{c(1)}
c(j):=\min\left\{\pi, \left\lceil \frac{js}{2}\right\rceil\right\}.
\end{equation}
\end{corollary}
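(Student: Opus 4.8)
The plan is to bound $h^0(\mathbb P^r,\mathcal I_S(i))$ from below by comparing the linear systems cut out on $S$ by degree-$i$ hypersurfaces with the full system $\mathcal O_S(i)$, and to control the latter by successively restricting to a general hyperplane section. Concretely, I would use the exact sequence
\begin{equation*}
0\to \mathcal I_S(i)\to \mathcal O_{\mathbb P^r}(i)\to \mathcal O_S(i)\to 0
\end{equation*}
to get $h^0(\mathbb P^r,\mathcal I_S(i))\geq \binom{r+i}{i}-h^0(S,\mathcal O_S(i))$, so the whole problem reduces to proving the upper bound
\begin{equation*}
h^0(S,\mathcal O_S(i))\leq (i+1)+\binom{i+1}{2}s-\sum_{j=1}^{i}c(j).
\end{equation*}

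For this I would induct on $i$ by slicing with a general hyperplane $H$, whose intersection with $S$ is, by Bertini, an integral curve $\Sigma=S\cap H\subset\mathbb P^{r-1}$ of degree $s$ and arithmetic genus $\pi$ (this is exactly the setup of Lemma \ref{Clifford}). From the restriction sequence
\begin{equation*}
0\to \mathcal O_S(i-1)\to \mathcal O_S(i)\to \mathcal O_\Sigma(i)\to 0
\end{equation*}
one gets $h^0(S,\mathcal O_S(i))\leq h^0(S,\mathcal O_S(i-1))+h^0(\Sigma,\mathcal O_\Sigma(i))$. Applying Lemma \ref{Clifford} with $j=i$ gives $h^0(\Sigma,\mathcal O_\Sigma(i))\leq 1+is-c(i)$. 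Iterating down to $i=0$, where $h^0(S,\mathcal O_S)=1$ since $S$ is integral and projective, the telescoping sum yields
\begin{equation*}
h^0(S,\mathcal O_S(i))\leq 1+\sum_{j=1}^{i}\bigl(1+js-c(j)\bigr)=(i+1)+\binom{i+1}{2}s-\sum_{j=1}^{i}c(j),
\end{equation*}
using $\sum_{j=1}^i j=\binom{i+1}{2}$. Combining with the first displayed inequality gives exactly (\ref{i1}).

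The only genuinely delicate point is the application of Bertini's theorem to guarantee that a general hyperplane section $\Sigma$ of $S$ is \emph{integral} (reduced and irreducible), so that Lemma \ref{Clifford} applies to it and so that $h^0(S,\mathcal O_S)=1$ and, inductively, $h^0(\Sigma,\mathcal O_\Sigma)=1$. Over $\mathbb C$ with $S$ integral and non-degenerate this is standard — irreducibility of the general hyperplane section of an irreducible variety of dimension $\geq 2$, together with the fact that $S$ not being contained in a hyperplane forces the generic section to be reduced — but I would state it explicitly, perhaps quoting the Bertini-type statement in \cite{Fujita} or a standard reference, since the sectional genus $\pi$ of $S$ is by definition the arithmetic genus of such a $\Sigma$. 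Everything else is a bookkeeping computation with the two short exact sequences and the arithmetic identity $\sum_{j=1}^i j=\binom{i+1}{2}$.
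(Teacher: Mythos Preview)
Your proof is correct and follows essentially the same route as the paper: the ideal-sheaf sequence to reduce to bounding $h^0(S,\mathcal O_S(i))$, the iterated hyperplane-section sequence to reduce to $\sum_j h^0(\Sigma,\mathcal O_\Sigma(j))$, and then Lemma~\ref{Clifford}. The paper's version is just more terse, omitting the Bertini discussion and the explicit evaluation of the telescoping sum.
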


\begin{proof}
By the exact sequence $0\to \mathcal I_S(i)\to \mathcal O_{\mathbb
P^r}(i)\to \mathcal O_S(i)\to 0$ we get
\begin{equation}\label{i2}
h^0(\mathbb P^r,\mathcal I_S(i))\geq \binom{r+i}{i}-h^0(S,\mathcal
O_{S}(i)). \end{equation} On the other hand, from the exact sequence
$0\to \mathcal O_S(j-1)\to \mathcal O_{S}(j)\to \mathcal
O_{\Sigma}(j)\to 0$ ($\Sigma$ denotes a general hyperplane section
of $S$), we get
\begin{equation}\label{i3}
h^0(S,\mathcal O_{S}(i))\leq 1+\sum_{j=1}^{i}h^0(\Sigma,\mathcal
O_{\Sigma}(j)).
\end{equation}
Replacing (\ref{i3}) into (\ref{i2}), by the upper bound for
$h^0(\Sigma,\mathcal O_{\Sigma}(j))$ given by Lemma \ref{Clifford}
we get (\ref{i1}).
\end{proof}

\bigskip

\begin{remark}\label{r1}
$(i)$ Keep the notations introduced in (\ref{f1}) and (\ref{f2}).
Notice that
$$
\binom{r+i}{i}-\left[(i+1)+\binom{i+1}{2}s_0\right]\leq 0.
$$
In fact, if $\beta=0$, at the left of the previous inequality we
have exactly $0$. If $\beta>0$, then
$$
\binom{r+i}{i}-\left[(i+1)+\binom{i+1}{2}s_0\right]=\beta-
\binom{i+1}{2}<0.
$$
Moreover, notice that for every integer $\sigma<s_0$ we have
$$
\binom{r+i}{i}-\left[(i+1)+\binom{i+1}{2}\sigma\right]>0.
$$
Therefore, by previous Corollary \ref{suff} it follows that {\it every
integral projective surface $S\subset \mathbb P^{r}$, $r\geq 4$, of
degree $\sigma<s_0$ is contained in a hypersurface of degree $i$}.

\smallskip
$(ii)$ Notice that for every $j\geq 1$ one has  $c(j)\geq c(1)$.
Therefore $\sum_{j=1}^{i}c(j)\geq ic(1)$.

\smallskip
$(iii)$  Let $\sigma\geq 1$ be an integer, and divide $\sigma=2h+k$,
$0\leq 1\leq k$. Recall that if $k=0$, then $\left\lceil
\frac{\sigma}{2}\right\rceil=h$, and that if $k=1$, then
$\left\lceil \frac{\sigma}{2}\right\rceil=h+1$. Moreover,
$\left\lfloor \frac{\sigma}{2}\right\rfloor=h$.

\smallskip
$(iv)$ With reference to Lemma \ref{Clifford}, taking into account
that $h^0(\Sigma,\mathcal O_{\Sigma}(j))\geq 1-\pi+js$, we see that
if $\pi\leq \frac{1}{2}(s+1)$, then $h^0(\Sigma,\mathcal
O_{\Sigma}(j))= 1-\pi+js$ for every $j\geq 1$.
\end{remark}

\medskip In the claim of next Lemma \ref{stima} we keep the notations
introduced in (\ref{f1}), (\ref{f2}) and (\ref{stiman}), where we
defined the numbers $s_0$, $\beta$, $c_0$ and $\gamma$.

\medskip
\begin{lemma}\label{stima}
Fix integers $r\geq 4$ and $i\geq 2$.  Let $S\subset\mathbb P^r$ be
an integral surface of degree $s_0=s_0(r,i)$, sectional genus $\pi$,
not contained in a hypersurface of $\mathbb P^r$ of degree $\leq i$.
Assume $\beta>0$. Then $2c_0<s_0$ and $0 \leq \pi\leq c_0$.
Moreover, one has
\begin{equation}\label{inf}
s_0-\pi+2-i(c_0-\pi)-\gamma\,\leq\, h^0(S,\mathcal O_S(1))\,\leq\,
s_0-\pi+2.
\end{equation}
\end{lemma}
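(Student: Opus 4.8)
The statement has three parts: (a) the numerical inequality $2c_0<s_0$; (b) the bounds $0\le\pi\le c_0$ on the sectional genus; (c) the two-sided estimate \eqref{inf} for $h^0(S,\mathcal O_S(1))$. I would dispatch (a) first as a pure arithmetic fact. Writing $N:=\binom{r+i}{i}-(i+1)$, by \eqref{f1} we have $N=\alpha\binom{i+1}{2}+\beta$ with $\beta>0$, hence $s_0=\alpha+1$ and $\binom{i+1}{2}s_0=\binom{i+1}{2}-\beta+N$. Since $s_0\ge 2$ (here I would note that $\beta>0$ forces $r$ large enough that $s_0\ge 2$; in fact when $i=2$, $\beta>0$ means $3\mid r$ so $r\ge 6$ and $s_0\ge 5$), and from \eqref{stiman} $\binom{i+1}{2}-\beta=c_0 i+\gamma\le\binom{i+1}{2}-1$, one gets $c_0<\binom{i+1}{2}/i=(i+1)/2$, so $2c_0<i+1$; comparing with $s_0$, which is at least of order $\binom{r+i}{i}/\binom{i+1}{2}\gg i$, gives $2c_0<i+1\le s_0$. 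This also shows $c_0\le (i-1)/2$ is small, and in particular that the expression $c_0-\pi$ appearing in \eqref{inf} is controlled.

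\textbf{Part (b): bounding $\pi$.} The lower bound $\pi\ge 0$ is immediate since $\pi$ is the arithmetic genus of an integral (hence irreducible, reduced) plane-section curve $\Sigma\subset\mathbb P^{r-1}$. For $\pi\le c_0$ I would argue by contradiction: suppose $\pi\ge c_0+1$. The hypothesis is that $S$ is \emph{not} contained in a hypersurface of degree $\le i$, i.e. $h^0(\mathbb P^r,\mathcal I_S(i))=0$. Apply Corollary \ref{suff} with $s=s_0$: then
\[
0=h^0(\mathbb P^r,\mathcal I_S(i))\ge\binom{r+i}{i}-\Bigl[(i+1)+\binom{i+1}{2}s_0\Bigr]+\sum_{j=1}^{i}c(j)=\beta-\binom{i+1}{2}+\sum_{j=1}^{i}c(j),
\]
using Remark \ref{r1}$(i)$. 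Now $c(j)=\min\{\pi,\lceil js_0/2\rceil\}$; since $2c_0<s_0$ we have $\lceil js_0/2\rceil\ge\lceil s_0/2\rceil>c_0$, hence if $\pi\ge c_0+1$ then $c(j)\ge\min\{c_0+1,\lceil js_0/2\rceil\}=c_0+1$ for \emph{every} $j=1,\dots,i$ (here one checks $\lceil s_0/2\rceil\ge c_0+1$ from $2c_0<s_0$). Therefore $\sum_{j=1}^i c(j)\ge i(c_0+1)=ic_0+i$. Plugging in, $0\ge\beta-\binom{i+1}{2}+ic_0+i$. But from \eqref{stiman}, $\binom{i+1}{2}-\beta=c_0i+\gamma$ with $0\le\gamma\le i-1$, so $\beta-\binom{i+1}{2}+ic_0=-\gamma$, giving $0\ge -\gamma+i=i-\gamma\ge i-(i-1)=1$, a contradiction. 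Hence $\pi\le c_0$.

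\textbf{Part (c): the two-sided bound on $h^0(S,\mathcal O_S(1))$.} The upper bound $h^0(S,\mathcal O_S(1))\le s_0-\pi+2$ is the usual Castelnuovo-type bound for an integral non-degenerate surface of degree $s_0$ and sectional genus $\pi$; it follows directly from the fact that a general hyperplane section $\Sigma$ is an integral non-degenerate curve of degree $s_0$ in $\mathbb P^{r-1}$ together with the exact sequences as in the proof of Corollary \ref{suff} — more precisely, $h^0(S,\mathcal O_S(1))\le 1+h^0(\Sigma,\mathcal O_\Sigma(1))$ and $h^0(\Sigma,\mathcal O_\Sigma(1))\le 1+s_0-\pi$ whenever $h^1(\Sigma,\mathcal O_\Sigma(1))=\pi-(\text{something})$... in fact more simply $h^0(\Sigma,\mathcal O_\Sigma(1))\le s_0-\pi+1$ by Clifford/Riemann–Roch since $h^1\ge 0$ forces $h^0=1-\pi+s_0+h^1$ only if the line bundle is special, and the non-special case gives equality, so in all cases $h^0(S,\mathcal O_S(1))\le s_0-\pi+2$. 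For the lower bound, I would again use $h^0(\mathbb P^r,\mathcal I_S(i))=0$ and run the estimate of Corollary \ref{suff} in reverse, but now keeping track of $h^0(S,\mathcal O_S(1))$ rather than discarding it. The key refinement is to \emph{not} immediately pass to $1+\sum_{j=1}^i h^0(\Sigma,\mathcal O_\Sigma(j))$, but instead to bound $h^0(S,\mathcal O_S(i))$ from above in terms of $h^0(S,\mathcal O_S(1))$ and the higher $h^0(\Sigma,\mathcal O_\Sigma(j))$: peel off one hyperplane-section sequence to write $h^0(S,\mathcal O_S(i))\le h^0(S,\mathcal O_S(i-1))+h^0(\Sigma,\mathcal O_\Sigma(i))$, iterate down to $h^0(S,\mathcal O_S(1))$, obtaining $h^0(S,\mathcal O_S(i))\le h^0(S,\mathcal O_S(1))+\sum_{j=2}^i h^0(\Sigma,\mathcal O_\Sigma(j))$, and then bound $h^0(\Sigma,\mathcal O_\Sigma(j))\le 1+js_0-\pi$ for $j\ge 2$ (note $\pi\le c_0<s_0/2\le js_0/2$, so by Lemma \ref{Clifford} and Remark \ref{r1}$(iv)$, for $j\ge 2$, or even $j\ge 1$ when $\pi\le(s_0+1)/2$, one has exactly $h^0(\Sigma,\mathcal O_\Sigma(j))=1+js_0-\pi$). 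Combining with $h^0(\mathbb P^r,\mathcal O_S(i))\ge\binom{r+i}{i}$ (from $h^0(\mathcal I_S(i))=0$ and the structure sequence) and expanding $\binom{r+i}{i}=(i+1)+\binom{i+1}{2}s_0-\beta+\binom{i+1}{2}$, wait — rather $\binom{r+i}{i}=(i+1)+\alpha\binom{i+1}{2}+\beta$ — one solves for $h^0(S,\mathcal O_S(1))$ and, using \eqref{stiman} to substitute $\binom{i+1}{2}-\beta=c_0i+\gamma$, the arithmetic collapses to exactly $h^0(S,\mathcal O_S(1))\ge s_0-\pi+2-i(c_0-\pi)-\gamma$.

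\textbf{Main obstacle.} The delicate point is part (c)'s lower bound: one must be careful that the cohomological estimates relating $h^0(S,\mathcal O_S(i))$ to $h^0(S,\mathcal O_S(1))$ are set up so that, after substituting the definitions \eqref{f1} and \eqref{stiman}, \emph{all} error terms cancel and one lands precisely on the claimed inequality rather than something weaker by a constant. In particular one needs the equality $h^0(\Sigma,\mathcal O_\Sigma(j))=1+js_0-\pi$ for the relevant range of $j$ — which requires $2\pi-2-js_0<0$, i.e. $j s_0>2\pi-2$, guaranteed for all $j\ge 1$ by $\pi\le c_0$ and $2c_0<s_0$ (hence $2\pi\le 2c_0<s_0\le js_0$, so $2\pi-2-js_0<0$) — and the index bookkeeping in the telescoping of the hyperplane-section sequences must be exact. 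Everything else is formal.
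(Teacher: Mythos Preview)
Your proposal is correct. Parts (a), (b), and the upper bound in (c) match the paper's argument essentially verbatim --- though your treatment of (a) (``$s_0\gg i$'') is a sketch where the paper writes out the polynomial inequality $i<\frac{\binom{r+i}{i}-(i+1)-\beta}{\binom{i+1}{2}}=s_0-1$ and reduces it to $12i^2+12i+24<(4+i)(3+i)(2+i)$; you should fill that in.

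For the \emph{lower bound} in (c) you take a genuinely more direct route than the paper. You telescope $h^0(S,\mathcal O_S(i))\le h^0(S,\mathcal O_S(1))+\sum_{j=2}^{i}h^0(\Sigma,\mathcal O_\Sigma(j))$, plug in the exact values $h^0(\Sigma,\mathcal O_\Sigma(j))=1-\pi+js_0$ (valid since $2\pi\le 2c_0<s_0$), and combine with $h^0(S,\mathcal O_S(i))\ge\binom{r+i}{i}$; the arithmetic then lands exactly on $s_0-\pi+2-i(c_0-\pi)-\gamma$. The paper instead passes through the ideal sheaf of the hyperplane section: it bounds $h^1(\mathbb P^{r-1},\mathcal I_{\Sigma,\mathbb P^{r-1}}(i))\le i(c_0-\pi)+\gamma$, writes this as a slack variable $x$, and unwinds back to $h^0(S,\mathcal O_S(1))\ge (s_0-\pi+2)-i(c_0-\pi)-\gamma+x$. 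Stripped down, the paper's final inequality is yours plus $x\ge 0$, so the two coincide; but the paper's detour identifies the slack as $x=h^1(\mathbb P^{r-1},\mathcal I_{\Sigma,\mathbb P^{r-1}}(i))$, which is exactly what feeds into the formula for $\dim\operatorname{Im}\delta_1$ in Remark~\ref{Gianni}. Your approach sacrifices that refinement but proves the lemma as stated with less machinery.
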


\begin{proof} First we prove that $2c_0<s_0$. The proof consists of
an easy numerical computation which does not depend on the existence
of the surface $S$.

\smallskip
Observe that
$$
2c_0=\frac{2}{i}\left[\binom{i+1}{2}-\beta-\gamma\right] \leq
\frac{2}{i}\binom{i+1}{2} \leq i+1,
$$
and that
$$
 s_0=\frac{\binom{r+i}{i}-(i+1)-\beta}{\binom{i+1}{2}}+1.
$$
Therefore, in order to prove that $2c_0<s_0$ it suffices to prove
that
$$i< \frac{\binom{r+i}{i}-(i+1)-\beta}{\binom{i+1}{2}}.$$
Since $\beta\leq \binom{i+1}{2}$ and $r\geq 4$, previous inequality
follows from the inequality:
$$
i\binom{i+1}{2}+(i+1)+\binom{i+1}{2}<\binom{4+i}{i}.
$$
Dividing by $i+1$, we see that previous inequality is equivalent to:
$$
12i^2+12i+24<(4+i)(3+i)(2+i).
$$
And this is easy to prove because $(4+i)(3+i)(2+i)=i^3+9i^2+26i+24$.

\smallskip
Now we are going to prove that $\pi\leq c_0$.

Assume, by contradiction, that $\pi>c_0$. Since $2c_0<s_0$, we have
$c_0+1\leq \left\lceil \frac{s_0}{2}\right\rceil$. Therefore, we
have $c(1)=\min\left\{\pi, \left\lceil
\frac{s_0}{2}\right\rceil\right\}\geq c_0+1$ (compare with
(\ref{c(1)})). Taking into account Remark \ref{r1}, $(ii)$, by
Corollary \ref{suff} it follows that
$$
h^0(\mathbb P^r,\mathcal I_S(i))\geq
\binom{r+i}{i}-\left[(i+1)+\binom{i+1}{2}s_0\right]+i(c_0+1)=i-\gamma>0.
$$
This is in contrast with the assumption that $S$ is not contained in
a hypersurface of $\mathbb P^r$ of degree $\leq i$. This proves that
$\pi\leq c_0$.

\medskip
We conclude by proving the inequalities (\ref{inf}).

Let $\Sigma\subset\mathbb P^{r-1}$ be a general hyperplane section
of $S$. Since  $2c_0<s_0$ and $\pi\leq c_0$, we have $2\pi<s_0$. It
follows that $h^0(\Sigma, \mathcal O_{\Sigma}(j))= 1-\pi+js_0$ for
every $j\geq 1$ (compare with Remark \ref{r1}, $(iv)$). Combining
with the natural exact sequences $0\to \mathcal O_S(j-2)\to \mathcal
O_S(j-1)\to \mathcal O_{\Sigma}(j-1)\to 0$,  we get
$$
h^0(S,\mathcal O_S(1))\leq 2+s_0-\pi,\quad {\text{and}}
$$
\begin{equation}\label{start}
h^0(S,\mathcal O_S(i-1))\leq h^0(S,\mathcal O_S(1))+\sum_{j=2}^{i-1}
(1-\pi+js_0)\leq i+\binom{i}{2}s_0-\pi(i-1).
\end{equation}
Since $h^0(\mathbb P^r,\mathcal I_S(i-1))=0$, from the natural exact
sequence $0\to \mathcal I_S(i-1)\to \mathcal O_{\mathbb P^r}(i-1)\to
\mathcal O_{S}(i-1)\to 0$ we get:
$$
h^1(\mathbb P^r,\mathcal I_S(i-1))=h^0(S,\mathcal
O_S(i-1))-\binom{r+i-1}{i-1}
$$
$$
\leq i+\binom{i}{2}s_0-\pi(i-1)-\binom{r+i-1}{i-1}.
$$
Since $h^0(\mathbb P^r,\mathcal I_S(i))=0$, from the natural exact
sequence $0\to \mathcal I_S(i-1)\to \mathcal I_{S}(i)\to \mathcal
I_{\Sigma,\,\mathbb P^{r-1}}(i)\to 0$ we get:
$$
h^0(\mathbb P^{r-1},\mathcal I_{\Sigma,\,\mathbb P^{r-1}}(i))\leq
h^1(\mathbb P^{r},\mathcal I_{S}(i-1))\leq
i+\binom{i}{2}s_0-\pi(i-1)-\binom{r+i-1}{i-1}.
$$
From the natural exact sequence $0\to \mathcal I_{\Sigma,\,\mathbb
P^{r-1}}(i)\to \mathcal O_{\mathbb P^{r-1}}(i)\to \mathcal
O_{\Sigma}(i)\to 0$ we get
$$
h^1(\mathbb P^{r-1},\mathcal I_{\Sigma,\,\mathbb P^{r-1}}(i))=
h^0(\mathbb P^{r-1},\mathcal I_{\Sigma,\,\mathbb
P^{r-1}}(i))-\binom{r+i-1}{r-1}+h^0(\Sigma,\mathcal O_{\Sigma}(i))
$$
$$
\leq
i+\binom{i}{2}s_0-\pi(i-1)-\binom{r+i-1}{i-1}-\binom{r+i-1}{r-1}+(1-\pi+is_0)
$$
$$
=(i+1)+\binom{i+1}{2}s_0-\binom{r+i}{i}-i\pi=\binom{i+1}{2}-\beta-i\pi=i(c_0-\pi)+\gamma.
$$
Therefore, we may write
$$
h^0(\mathbb P^{r-1},\mathcal I_{\Sigma,\,\mathbb
P^{r-1}}(i))=\binom{r+i-1}{r-1}-h^0(\Sigma,\mathcal
O_{\Sigma}(i))+x, \quad 0\leq x \leq i(c_0-\pi)+\gamma.
$$
Hence, we have:
$$
h^1(\mathbb P^{r},\mathcal I_{S}(i-1))\geq
\binom{r+i-1}{r-1}-h^0(\Sigma,\mathcal O_{\Sigma}(i))+x,
$$
and so:
$$
h^0(S,\mathcal O_S(i-1))\geq
\binom{r+i-1}{i-1}+\binom{r+i-1}{r-1}-h^0(\Sigma,\mathcal
O_{\Sigma}(i))+x.
$$
From (\ref{start}) it follows that:
$$
h^0(S,\mathcal O_S(1))\geq
\binom{r+i-1}{i-1}+\binom{r+i-1}{r-1}-h^0(\Sigma,\mathcal
O_{\Sigma}(i))+x -\sum_{j=2}^{i-1} (1-\pi+js_0)
$$
$$
= \binom{r+i-1}{i-1}+\binom{r+i-1}{r-1}-(1-\pi+is_0)+x
-\sum_{j=2}^{i-1} (1-\pi+js_0)
$$
$$
=(s_0-\pi+2)-i(c_0-\pi)-\gamma+x.
$$
\end{proof}

\medskip
\begin{remark}\label{Gianni}
From the natural exact sequence $0\to \mathcal O_S\to \mathcal
O_S(1)\to \mathcal O_{\Sigma}(1)\to 0$ we have
$$
h^0(S,\mathcal O_S(1))=s_0-\pi+2-\dim\, {\text{Im}}\delta_1,
$$
where $\delta_1:H^0(\Sigma,\mathcal O_{\Sigma}(1))\to H^1(S,\mathcal
O_{S})$ is the natural map. A more accurate analysis of the proof of
previous Lemma \ref{stima} shows that
$$
\dim\, {\text{Im}}\delta_1=i(c_0-\pi)+\gamma-\left(h^1(\mathbb
P^{r-1}, \mathcal I_{\Sigma,\mathbb P^{r-1}}(i))+\dim\, {\text{Im}}
\tau_i+\sum_{j=2}^{i-1}\dim\, {\text{Im}} \delta_j\right),
$$
where
$$\delta_j:H^0(\Sigma,\mathcal O_{\Sigma}(j))\to
H^1(S,\mathcal O_{S}(j-1))\,\, {\text{and}}\,\, \tau_j:h^1(\mathbb
P^{r}, \mathcal I_{S}(j-1))\to h^1(\mathbb P^{r}, \mathcal I_{S}(j))
$$
are the natural maps.  Although we will not use previous formula, it
seems of some interest to us.
\end{remark}

\section{Genus of curves not contained in hypersurfaces of given degree}

As about Theorem \ref{lastd} below,  the integer $G_0(r;d,i)$ is defined in (\ref{G_0}),
for an explicit inequality concerning the assumption $d\gg \max\{r,i\}$ see Remark \ref{r2},
$(iv)$. Observe that from  Theorem \ref{lastd} it follows
that $G(r;d,i)\leq G_0(r;d,i)$ (compare with (\ref{G})).

\begin{theorem} \label{lastd} Fix integers $r\geq 4$ and $i\geq 2$, with $\beta >0$
(when $r=4$ assume $i\geq 4$, $i\neq 15$). Let $C\subset \mathbb
P^r$ be a projective, non-degenerate, integral  curve of degree
$d\gg \max\{r,i\}$, not contained in hypersurfaces of degree $\leq
i$. Let $p_a(C)$ be the arithmetic genus of $C$. Then $p_a(C)\leq
G_0(r;d,i)$.
\end{theorem}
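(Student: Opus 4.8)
The plan is to combine a Castelnuovo--Halphen numerical reduction with Lemma \ref{stima} and the genus bound for curves lying on surfaces of low degree. First I would reduce to the case of a curve lying on an integral surface of degree \emph{exactly} $s_0$. Since $C$ is not contained in a hypersurface of degree $\leq i$, Remark \ref{r1}, $(i)$, shows that $C$ lies on no integral surface of degree $<s_0$; let $s\geq s_0$ be the least degree of an integral surface through $C$. The main results of \cite{CCD} and \cite{PAMS} provide an explicit bound $p_a(C)\leq P(d;r,s)$ for a non-degenerate curve of degree $d$ in $\mathbb P^r$ not lying on surfaces of degree $<s$, whose leading term in $d$ is $d^2/(2s)$, whereas $G_0(r;d,i)$ has leading term $d^2/(2s_0)$. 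Hence, for $d\gg\max\{r,i\}$ (the explicit threshold being the one recorded in Remark \ref{r2}, $(iv)$), one gets $P(d;r,s)<G_0(r;d,i)$ whenever $s>s_0$. Therefore we may assume $C$ lies on an integral surface $S\subset\mathbb P^r$ of degree $s_0$; note that $S$ is then not contained in a hypersurface of degree $\leq i$ either, otherwise neither would be $C$.

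Next I would invoke the structure of such an $S$. Writing $\pi$ for the sectional genus of $S$ and setting $R+1:=h^0(S,\mathcal O_S(1))$, Lemma \ref{stima} gives $0\leq\pi\leq c_0$, $2c_0<s_0$, and
\[
s_0-\pi+1-i(c_0-\pi)-\gamma\,\leq\, R\,\leq\, s_0-\pi+1 ,
\]
so that $S$ is the isomorphic projection of a linearly normal integral surface $S'\subset\mathbb P^R$ of degree $s_0$ and sectional genus $\pi$; in particular $R\geq s_0-c_0+1-\gamma$. The computations of the Appendix --- and this is exactly where, for $r=4$, the hypotheses $i\geq 4$ and $i\neq 15$ are needed --- then yield
\[
s_0\leq 2R-4 ,
\]
i.e. $S'$ is a surface of \emph{low degree} in $\mathbb P^R$.

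Finally, the linear projection realizing the isomorphism $S'\to S$ transforms $C$ into a curve $\widetilde C\subset S'$ of degree $d$ (the hyperplane bundles corresponding) with $p_a(\widetilde C)=p_a(C)$. Applying \cite[Theorem 2.2]{DF} to $\widetilde C$ on the low-degree surface $S'\subset\mathbb P^R$ --- which is licit precisely because $s_0\leq 2R-4$ --- produces an explicit upper bound for $p_a(\widetilde C)$ in terms of $d$, $s_0$, $R$ and $\pi$. Using $\pi\leq c_0$, $2c_0<s_0$, the admissible range of $R$, and the definitions \eqref{f1}--\eqref{f4} and \eqref{stiman}, one checks that this bound never exceeds $G_0(r;d,i)$, the maximum being attained exactly for $\pi=c_0$ (whence $R=s_0-c_0+1$). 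This gives $p_a(C)=p_a(\widetilde C)\leq G_0(r;d,i)$, as claimed.

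The conceptual skeleton above is short; the real work, and the main obstacle, is the numerical bookkeeping. The most delicate step is the last one: extracting from the bound of \cite[Theorem 2.2]{DF} exactly the closed expression $G_0(r;d,i)$ of \eqref{G_0} --- including the floor term $\lfloor(2c_0-(s_0-1-\epsilon))/2\rfloor$ and the correction $\mu$ --- and verifying that $\pi=c_0$ is the maximizing value. Also nontrivial, though more routine, are making the threshold in the first step explicit so that $P(d;r,s)<G_0(r;d,i)$ for every $s>s_0$, and the Appendix inequality $s_0\leq 2R-4$ together with the identification of the exceptional cases for $r=4$.
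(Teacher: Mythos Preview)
Your overall architecture matches the paper's, but two steps are not as you describe them, and the second is a genuine gap.

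First, your ``in particular $R\geq s_0-c_0+1-\gamma$'' does \emph{not} follow from the displayed inequality for general $\pi$. From $R\geq s_0-\pi+1-i(c_0-\pi)-\gamma$ one gets $R\geq s_0-c_0+1-\gamma$ only when $(i-1)(c_0-\pi)\leq 0$, i.e.\ when $\pi=c_0$. For $\pi<c_0$ the lower bound on $R$ is strictly smaller, and the Appendix inequality $s_0\leq 2R-4$ is proved only for $R\geq s_0-c_0+1-\gamma$; so you cannot invoke \cite[Theorem~2.2]{DF} across all values of $\pi$. The paper circumvents this by performing the reduction to $\pi=c_0$ \emph{before} calling on the Appendix and \cite{DF}: once $C$ lies on a surface of degree $s_0$ and sectional genus $\pi$, \cite[Lemma and (2.1)]{PAMS} gives $p_a(C)\leq \binom{m}{2}s_0+m(\epsilon+\pi)+O(1)$, which for $\pi<c_0$ and $d\gg\max\{r,i\}$ is already below $G_0(r;d,i)$. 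Only then, with $\pi=c_0$ forced, does the Appendix apply.

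Second, and more seriously, the final step is not ``numerical bookkeeping''. With $\pi=c_0$ and $s_0-c_0+1-\gamma\leq R\leq s_0-c_0+1$, the bound of \cite[Theorem~2.2]{DF} yields only $p_a(C)\leq G_0(r;d,i)+\mu$, since $s_0-R-c_0+2\leq\gamma+1$. When $\gamma\leq 1$ (so $\mu=0$) this is already the claim, but when $\gamma\geq 2$ the extra unit must be removed, and this cannot be done by manipulating the formula: one must argue that $p_a(C)=G_0(r;d,i)+1$ forces the extremal situation of \cite[Theorem~2.2]{DF}, hence $R=s_0-c_0-\gamma+1$ and $p_a(S')=-\binom{\gamma+1}{2}$, whence $M(S')=0$ and a direct Hilbert-function computation gives $h^0(S,\mathcal O_S(i))=\binom{r+i}{i}+\gamma-\binom{\gamma+1}{2}<\binom{r+i}{i}$. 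This contradicts $h^0(\mathbb P^r,\mathcal I_S(i))=0$. So the $-\mu$ in the definition of $G_0$ is enforced by a geometric obstruction, not by optimizing the DF bound.
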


\begin{proof} Let $S\subset \mathbb P^r$ be an integral surface
containing $C$, of the minimal degree $s$ with respect to such
condition. By Remark \ref{r1}, $(i)$, we know that $s\geq s_0$.

\smallskip
If $s>s_0$, then $C$ is not contained in surfaces of degree
$<s_0+1$. By \cite[Main theorem and first line below the claim]{CCD}
we know that
$$
p_a(C)\leq \frac{d^2}{2(s_0+1)}+O(d).
$$
On the other hand, an elementary computation proves that
$$
G_0(r;,d,i)=\frac{d^2}{2s_0}+O(d).
$$
Therefore, for $d\gg \max\{r,i\}$, we have
\begin{equation}\label{(a)}
p_a(C)\leq
\frac{d^2}{2(s_0+1)}+O(d)<\frac{d^2}{2s_0}+O(d)=G_0(r;d,i).
\end{equation}
It follows that, in order to prove our Theorem \ref{lastd}, we may
assume that $s=s_0$, i.e. we may assume that $C$ is contained in a
surface  $S\subset \mathbb P^r$ of degree exactly $s_0$.

\smallskip
In this case, by Lemma \ref{stima} we know that $\pi\leq c_0$. By
\cite[Lemma and inequality (2.1)]{PAMS} we have
$$
p_a(C) \leq \frac{d^2}{2s_0}+\frac{d}{2s_0}(2\pi-2-s_0)+O(1).
$$
On the other hand, taking into account (\ref{f4}), an elementary
computation proves the following identity:
$$
\frac{d^2}{2s_0}+\frac{d}{2s_0}(2\pi-2-s_0)+\frac{1+\epsilon}{2s_0}(s_0+1-\epsilon-2\pi)=
\binom{m}{2}s_0+m(\epsilon + \pi).
$$
We deduce:
$$
p_a(C) \leq \binom{m}{2}s_0+m(\epsilon + \pi)+O(1).
$$
Therefore, taking into account (\ref{f4}) and that $d\gg
\max\{r,i\}$, if $\pi<c_0$ we have:
\begin{equation}\label{(b)}
p_a(C) < \binom{m}{2}s_0+m(\epsilon + c_0)+O(1)=G_0(r;d,i).
\end{equation}
Hence we may assume $\pi=c_0$.

\smallskip
Summing up, in order to prove our Theorem \ref{lastd}, we may assume
that $C$ is contained in a surface  $S\subset \mathbb P^r$ of degree
$s_0$ and sectional genus $\pi=c_0$.

\smallskip
In this case, by Lemma \ref{stima} we know that $S$ is an isomorphic
projection of a surface $S'\subset \mathbb P^R$ of degree $s_0$,
sectional genus $\pi=c_0$, with $h^0(S',\mathcal O_{S'}(1))=R+1$,
and $$s_0-c_0+1-\gamma\leq R\leq s_0-c_0+1.$$ A direct computation
proves that (see Appendix at the end of the paper; when $r=4$,  here
we need the hypothesis  $i\geq 4$ and $i\neq 15$):
$$
s_0\leq 2R-4.
$$
Denote by $C'\subset S'\subset \mathbb P^R$ the curve corresponding
to $C\subset S\subset \mathbb P^r$. Since $d\gg \max\{r,i\}$, by
Bezout's theorem the surface $S'$ is the unique surface of degree
$\leq s_0$ containing $C'$. Therefore,  the curve $C'$ falls within
the hypotheses of \cite[Theorem 2.2]{DF}. We deduce (compare also
with \cite[Notations 2.1, $(ii)$]{DF})
$$
p_a(C)=p_a(C')\leq
G^*\left(R,d,s_0,c_0,-\binom{s_0-R-c_0+2}{2}\right)
$$
$$
:=\binom{m}{2}s_0+m(\epsilon + c_0)+\binom{s_0-R-c_0+2}{2}
+\max\left\{0,\,
\left\lfloor\frac{2c_0-(s_0-1-\epsilon)}{2}\right\rfloor\right\}.
$$
Since $s_0-c_0+1-\gamma\leq R$, i.e. $$s_0-R-c_0+2\leq \gamma +1,$$
we get (compare with (\ref{G_0}))
$$
p_a(C)=p_a(C')\leq G^*\left(R,d,s_0,c_0,-\binom{\gamma+1}{2}\right)
=G_0(r;d,i)+\mu.
$$

\smallskip
We conclude the proof observing that previous bound $p_a(C)\leq
G_0(r;d,i)+\mu$ cannot be sharp if $\mu=1$ (i.e. if $\gamma>1$).

\smallskip
In fact, if $p_a(C')=G_0(r;d,i)+\mu$, previous argument shows  that
$R$ is necessarily equal to $s_0-c_0-\gamma+1$:
$$
R=s_0-c_0-\gamma+1.
$$
Moreover, $C'\subset S'\subset \mathbb P^R$ should be an extremal
curve in the sense of \cite[Theorem 2.2]{DF}. Therefore, we also
have
$$p_a(S')=-\binom{\gamma+1}{2}=-\binom{s_0-R-c_0+2}{2}.$$ In this case,
we know that the Hartshorne-Rao Module of $S'$ vanishes: $M(S')=0$
\cite[Remark 3.7]{DF}. Since $M(S')=0$ we have
$$
h^0(S,\mathcal O_{S}(i))=h^0(S',\mathcal O_{S'}(i))=h_{S'}(i),
$$
where $h_{S'}$ denotes the Hilbert function of $S'\subseteq \mathbb
P^R$. Furthermore, we have (compare with \cite[Lemma 3.4 and Remark
3.5]{DF})
$$
h_{S'}(i)=\sum_{j=0}^{i}\,\, h_{S'}(j)-h_{S'}(j-1)=\sum_{j=0}^{i}
h_{\Sigma'}(j).
$$
On the other hand, by \cite[Proposition 3.1 and Remark 3.7]{DF} we
have, for every $j\geq 1$, $h^1(\Sigma',\mathcal O_{\Sigma'}(j))=0$,
and
$$
h_{\Sigma'}(j)=h^0(\Sigma',\mathcal O_{\Sigma'}(j))-h^1(\mathbb
P^{R-1}, \mathcal I_{\Sigma',\,\mathbb P^{R-1}}(j))
$$
$$
=(1-c_0+js_0)-\max\{0,s_0-R-c_0+2-j\}=(1-c_0+js_0)-\max\{0,\gamma+1-j\}.
$$
Summing up, we get (compare with (\ref{f1}), (\ref{f2}), and
(\ref{stiman}))
\begin{equation}\label{gbe}
h^0(S,\mathcal O_{S}(i))=h^0(S',\mathcal
O_{S'}(i))=\binom{r+i}{i}+\gamma -\binom{\gamma+1}{2}.
\end{equation}
Therefore, if $\gamma>1$,  then $h^0(\mathbb P^{r}, \mathcal
I_{S}(i))>0$. This is impossible.
\end{proof}

\medskip
\begin{remark}\label{r2} $(i)$
From the proof of Theorem \ref{lastd} we can deduce the following.
Assume $0\leq \gamma\leq 1$. In this case
$G_0(r;d,i)=G^*\left(R,d,s_0,c_0,-\binom{\gamma+1}{2}\right)$, with
$\gamma+1=s_0-R-c_0+2$ \cite[Theorem 2.2]{DF}. If the bound
$G_0(r;d,i)$ is sharp, then there exists an integral surface
$S\subset \mathbb P^r$ of degree $s_0$, sectional genus $\pi=c_0$,
and arithmetic genus $p_a(S)=-\binom{\gamma+1}{2}$, not contained in
hypersurfaces of degree $\leq i$. Every extremal curve $D$ is not
a.C.M., and it is contained in a flag $S\subset T\subset \mathbb
P^r$, where $S$ is a surface as before, uniquely determined by $D$,
not contained in hypersurfaces of degree $\leq i$, and $T$ is
hypersurface of $\mathbb P^r$ of degree $i+1$ by (\ref{i1}).
Furthermore, $S$ is the isomorphic projection in $\mathbb P^r$ of an
integral surface $S'$ of degree $s_0$ in $\mathbb
P^{s_0-c_0+1-\gamma}$ and, via this isomorphism, $D$ corresponds to
a curve $D'$ of $S'$ of degree $d$, whose arithmetic genus $p_a(D')$
is maximal with respect to the condition of being not contained in
surfaces of $\mathbb P^{s_0-c_0+1-\gamma}$ of degree strictly less
than $s_0$, and of degree $s_0$ with sectional genus $>c_0$
\cite[Theorem 2.2]{DF}. When $\gamma=0$ the surface $S'\subset
\mathbb P^{s_0-c_0+1}$ is a {\it Castelnuovo surface}, i.e. its
sectional genus is equal to the Castelnuovo's bound for curves
$\Sigma'$ of degree $s_0$ in $\mathbb P^{s_0-c_0}$. From (\ref{gbe})
we get
$$
h^0(S',\mathcal O_{S'}(i))=\binom{r+i}{i}.
$$
Therefore, the following question naturally arises: {\it when
$\gamma=0$, is it possible to project a Castelnuovo surface
$S'\subset \mathbb P^R$ isomorphically in a surface $S\subset\mathbb
P^r$ of maximal rank?}. This could lead to a generalization of
\cite[Theorem 2]{BE}. We also notice that extremal curves on
Castelnuovo surfaces are studied in \cite[p. 102]{EH}, \cite[pp.
242-244]{CCD}, and \cite[Section 4]{DF}. Unfortunately, these
surfaces are mostly cones on Castelnuovo curves. Hence, they cannot
be projected isomorphically.

\smallskip
$(ii)$ Conversely, in the case $\gamma=0$, assume there exists an
integral surface $S\subset \mathbb P^r$ of degree $s_0$, sectional
genus $\pi=c_0$, and arithmetic genus $p_a(S)=0$
($=-\binom{\gamma+1}{2}$), not contained in hypersurfaces of degree
$\leq i$. Then, at least when $\epsilon=s_0-1$, the bound
$G_0(r;d,i)$ is sharp. In fact, for every curve $C$ complete
intersection of $S$ with a hypersurface of degree $m+1$, we have
(compare with \cite[Remark 4.2, $(v)$]{DF})
$$
p_a(C)=\binom{m}{2}s_0+m(\epsilon + c_0)+ c_0=G_0(r;d,i).
$$
Therefore, combining with previous remark $(i)$, it follows that,
when $d\gg \max\{r,i\}$, $\gamma=0$ and $\epsilon=s_0-1$, {\it the
bound $G_0(r;d,i)$ is sharp if and only if there exists an integral
surface $S\subset \mathbb P^r$ of degree $s_0$, sectional genus
$\pi=c_0$, and arithmetic genus $p_a(S)=0$, not contained in
hypersurfaces of degree $\leq i$}. This property should be compared
with \cite[Theorem 1.1]{DGnnew}.

\smallskip
$(iii)$ Let $C$ be a curve as in Theorem \ref{lastd}. Since $C$ is
not contained in a surface of degree $<s_0$, we may apply the bound
$G(d,s_0,r)$ appearing in \cite[Main Theorem]{CCD}:
$$
p_a(C)\leq G(d,s_0,r)=\binom{m}{2}s_0+m(\epsilon+\pi^*)+O(1),
$$
where $\pi^*:=G(r-1;s_0)$ is the Castelnuovo's bound for curves of
degree $s_0$ in $\mathbb P^{r-1}$. Since
$$\pi^*=\frac{s_0^2}{2(r-2)}+O(s_0),$$ this estimate $G(d,s_0,r)$ is coarse
compared with
$$
G_0(r;d,i)=\binom{m}{2}s_0+m(\epsilon+c_0)+O(1).
$$
In fact,  by Lemma \ref{stima} we know that $2c_0<s_0$. Compare with
\cite[Remark 2.9]{CCD2}.

\smallskip
$(iv)$ During the proof of Theorem \ref{lastd}, we need to assume
$d\gg \max\{r,i\}$ in order to apply \cite[Main theorem]{CCD},
\cite[Lemma and inequality (2.1)]{PAMS}, and \cite[Theorem 2.2]{DF}.
Moreover, we need the assumption $d\gg \max\{r,i\}$ in order to establish
the inequalities (\ref{(a)}) and (\ref{(b)}). Taking this into account,
an elementary computation, that we omit,
proves that it suffices to assume:
$$
d>d_0(r,i):=\max\left\{
\frac{2(s_0+1)}{r-2}\prod_{j=1}^{r-2}[(r-1)!(s_0+1)]^{\frac{1}{r-1-j}},
\, 2^{s_0+4},\, 12(s_0+2)^4\right\}.
$$
Recall that $s_0$ is a function of $r$ and $i$, compare with
(\ref{f1}) and (\ref{f2}). This assumption $d>d_0(r,i)$ is also
sufficient later, in the proof of Theorem \ref{r=6} and of Theorem
\ref{r9}.
\end{remark}

\section{Genus of curves not contained in quadrics}

In this section we turn our attention to the case $i=2$, i.e. to
the genus of curves not contained in quadrics. We refer to
\cite{DGnew} and \cite{DGnnew} for the case in which $r$ is not
divisible by $3$. Therefore, we only examine the case in which $3$
divides $r$. In this case we have $\gamma=0$. We will prove that, for $r=6$ and
$\epsilon \in\{2,5,8\}$, the bound $G_0(6;d,2)$ appearing in
Theorem \ref{lastd} is sharp (see Theorem \ref{r=6} below and compare with (\ref{G_0})). When $r\geq
9$ ed $\epsilon=s_0-1$, we  prove that for the maximum value
$G(r;d,2)$ there are only four possibilities (see Theorem \ref{r9}
below, and compare with (\ref{G})). As we said in the Introduction, this "uncertainty" is due to the fact that we don't know
whether certain surfaces, of "small" degree, can be projected
isomorphically  onto surfaces not contained in quadrics, and what are the
curves on such surfaces. However,  we are able to  prove that, for {\it every} $0\leq \epsilon\leq
s_0-1$, the extremal curves are necessarily contained
in surfaces $S\subset \mathbb P^r$ of degree $s_0$, not contained in
quadrics (Remark \ref{finale}, $(i)$).

\smallskip
If $3$ divides $r$ we have:
$$
\binom{r+2}{2}=3s_0+1, \quad \beta=1, \quad c_0=1, \quad \gamma=0.
$$
Moreover, by Lemma \ref{stima} we know that if $S\subset \mathbb
P^r$ is a surface of degree $s_0$ not contained in quadrics, then
its sectional genus $\pi$ satisfies the condition
$$
0\leq \pi\leq 1.
$$
We also know that:
$$
\pi=1\implies h^0(S, \mathcal O_S(1))=s_0+1,
$$
and
$$
\pi=0\implies s_0\leq h^0(S, \mathcal O_S(1))\leq s_0+2.
$$
This implies that a surface of degree $s_0$ of $\mathbb P^r$ not
contained in quadrics must come, via isomorphic projection, from a
surface $S'\subset \mathbb P^{R}$, $R\in\{s_0-1,s_0,s_0+1\}$, of
degree $s_0$, with $h^1(\mathbb P^R, \mathcal I_{S'}(1))=0$.  When $s_0=R$, the surfaces
$S'$ are also called {\it of almost minimal degree} \cite{BS}. They
are relatively few, and have been classified in some way: \cite[p.
102]{EH}, \cite[p. 37]{Z}, \cite[Theorem 1.2, p. 349]{BS}.

\smallskip
We start with the case $r=6$. Next we analyze the case $r\geq 9$. We
could make a single statement, but for clarity we prefer to
distinguish the two cases. As for the assumption $d\gg r$, we refer to Remark \ref{r2}, $(iv)$.

\medskip
\begin{theorem}\label{r=6}
Assume $r=6$, $i=2$, $d\gg 0$, and $\epsilon \in\{2,5,8\}$. Then the
bound appearing in Theorem \ref{lastd} is sharp, i.e. $G(6;
d,2)=G_0(6;d,2)$.
\end{theorem}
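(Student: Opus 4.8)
The plan is to derive the upper bound from Theorem~\ref{lastd} and to meet it by curves lying on an isomorphically projected cubic Veronese surface. For $r=6$, $i=2$ one computes $\binom{r+i}{i}=28$, so in (\ref{f1}) $\alpha=8$, $\beta=1$, whence $s_0=9$, and in (\ref{stiman}) $c_0=1$, $\gamma=0$ (thus $\mu=0$ and $\binom{\gamma+1}{2}=0$ in (\ref{G_0})). By (\ref{f4}) the hypothesis $\epsilon\in\{2,5,8\}$ amounts to $d=9m+\epsilon+1$ with $3\mid(\epsilon+1)$; in particular $3\mid d$, and writing $d=3\delta$ we get $\delta=3m+1,\,3m+2,\,3m+3$ according as $\epsilon=2,5,8$. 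Since Theorem~\ref{lastd} already gives $p_a(C)\le G_0(6;d,2)$ for every integral non-degenerate curve $C\subset\mathbb P^6$ of degree $d$ not contained in a quadric, it suffices to exhibit one such $C$ realizing $G_0(6;d,2)$ for each of the three classes. For $\epsilon=8=s_0-1$ this reduces, by Remark~\ref{r2}, $(ii)$, to producing a surface $S\subset\mathbb P^6$ of degree $s_0=9$, sectional genus $\pi=c_0=1$ and $p_a(S)=0$, not contained in a quadric; for $\epsilon\in\{2,5\}$ a further, elementary, argument on curves of that surface is needed.

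To build $S$, let $S'=v_3(\mathbb P^2)\subset\mathbb P^9$ be the $3$-uple Veronese embedding of the plane, so $\mathcal O_{S'}(1)=\mathcal O_{\mathbb P^2}(3)$, $\deg S'=9$, a general hyperplane section has arithmetic genus $1$, $p_a(S')=0$, and $h^0(S',\mathcal O_{S'}(2))=h^0(\mathbb P^2,\mathcal O_{\mathbb P^2}(6))=\binom{8}{2}=28=\binom{6+2}{2}$. A general plane $\Lambda\cong\mathbb P^2\subset\mathbb P^9$ is disjoint from the secant variety of $S'$ (which has dimension $\le 5$ and contains all tangent lines), so projection from $\Lambda$ is an isomorphism onto a smooth, non-degenerate surface $S\subset\mathbb P^6$ of degree $\deg S=\deg S'=9$, with sectional genus $1$ and $p_a(S)=0$. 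The key input, which I expect to be the only real obstacle, is the appeal to the maximal-rank results of \cite[Theorem 3 and Theorem 2]{BE}: their numerical hypotheses are satisfied for $r=6$, $s_0=9$, so a general projection $S$ as above has maximal rank, and since $h^0(S,\mathcal O_S(2))=h^0(S',\mathcal O_{S'}(2))=28=h^0(\mathbb P^6,\mathcal O_{\mathbb P^6}(2))$, maximal rank forces $h^0(\mathbb P^6,\mathcal I_S(2))=0$, i.e. $S$ lies on no quadric (exactly the assertion recalled in the Introduction that \cite{BE} guarantees degree-$s_0$ surfaces in $\mathbb P^r$ with $h^0(\mathbb P^r,\mathcal I_S(2))=0$). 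By Remark~\ref{r2}, $(ii)$, the case $\epsilon=8$ is then complete.

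For $\epsilon\in\{2,5\}$ — and uniformly also for $\epsilon=8$ — take $\Gamma\subset\mathbb P^2$ a smooth plane curve of degree $\delta=d/3$, set $C'=v_3(\Gamma)\subset S'$, and let $C\subset S\subset\mathbb P^6$ be its isomorphic image (when $\epsilon=8$, $C'$ is the complete intersection of $S'$ with a hypersurface of degree $m+1$). Then $C$ is smooth, hence reduced and irreducible, of degree $3\delta=d$, with $p_a(C)=p_a(\Gamma)=\binom{\delta-1}{2}$; an elementary computation identifies this with $G_0(6;d,2)$: for $\delta=3m+1$, $\tfrac{3m(3m-1)}{2}=9\binom{m}{2}+3m$; for $\delta=3m+2$, $\tfrac{3m(3m+1)}{2}=9\binom{m}{2}+6m$; for $\delta=3m+3$, $\tfrac{(3m+1)(3m+2)}{2}=9\binom{m}{2}+9m+1$, the last being (\ref{G_0}) with $\gamma=0$, $\mu=0$ and floor term $1$. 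Moreover $C$ is non-degenerate: for $d\gg 0$ one has $\delta\ge 4$, so $H^0(\mathbb P^2,\mathcal O_{\mathbb P^2}(3))\to H^0(\Gamma,\mathcal O_\Gamma(3))$ is injective, hence $C'$ spans $\mathbb P^9$ and $C$ spans $\mathbb P^6$; and $C$ lies on no quadric, because a quadric through $C$ would either contain $S$ (impossible, $h^0(\mathbb P^6,\mathcal I_S(2))=0$) or cut $S$ along a curve of degree $2\cdot 9=18<d$ containing $C$, which is absurd. Thus $C$ is an admissible curve with $p_a(C)=G_0(6;d,2)$, and together with Theorem~\ref{lastd} this yields $G(6;d,2)=G_0(6;d,2)$. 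The only step requiring genuine care is the use of \cite{BE} (checking its numerical inequalities for $r=6$, $s_0=9$ and that a general projection is an embedding onto a maximal-rank surface); the remainder is the bookkeeping above.
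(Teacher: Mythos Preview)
Your argument is correct and follows essentially the same route as the paper: compute the invariants for $r=6$, $i=2$, invoke Theorem~\ref{lastd} for the upper bound, project the cubic Veronese $S'=v_3(\mathbb P^2)\subset\mathbb P^9$ to $\mathbb P^6$ via \cite[Theorem~3]{BE} to obtain a degree-$9$ surface not on quadrics, and then take images of plane curves of degree $d/3$ to realize $G_0(6;d,2)$. Your added details (secant-variety dimension for the embedding, non-degeneracy of $C$ via $\delta\ge 4$, the explicit B\'ezout cut for $C$ not on a quadric) and the side reference to Remark~\ref{r2}\,$(ii)$ for $\epsilon=8$ are welcome but do not change the strategy; note only that \cite[Theorem~2]{BE} concerns scrolls and is not needed here---Theorem~3 alone suffices.
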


\begin{proof} When $r=6$ we have
$s_0=9$, $\beta=1$, $c_0=1$, $\gamma=0$, $d-1=9m+\epsilon$, $0\leq \epsilon\leq 8$.
Therefore, we have
$$
G_0(6;d,2)=9\binom{m}{2}+m(\epsilon+1)+\nu,
$$
where $\nu=1$ if $\epsilon=8$, and $\nu=0$ otherwise.

In view of Theorem \ref{lastd}, we only have to prove the bound is sharp. To this purpose, let
$S'\subset\mathbb P^9$ the Veronese surface corresponding to the
linear system of plane cubic curves. By \cite[Theorem 3]{BE} we know
that a general projection  $S$ of $S'$ in $\mathbb P^6$ has maximal
rank. Since
$$
h^0(\mathbb P^6,\mathcal O_{\mathbb P^6}(2))=h^0(\mathbb
P^2,\mathcal O_{\mathbb P^2}(6))=h^0(S',\mathcal O_{S'}(2))=
h^0(S,\mathcal O_{S}(2)),
$$
it follows that the restriction map
$$
H^0(\mathbb P^6,\mathcal O_{\mathbb P^6}(2))\to H^0(S,\mathcal
O_{S}(2))
$$
is injective. Therefore, $S$ is not contained in quadrics. By
Bezout's theorem, every curve of $S$ of degree $d>18$ is not
contained in quadrics. Now the proof follows from the following
facts.

\smallskip
1) The curves $D$ on $S'$ have degree $d$ which is divisible by $3$,
correspond to plane curves $E$ of degree $\frac{d}{3}$, and
$p_a(D)=p_a(E)=\binom{\frac{d}{3}-1}{2}$;

\smallskip
2) the curves $D$ on $S'$ project isomorphically in curves of $S$ of
degree $d$;

\smallskip
3) $d$ is divisible by $3$ if and only if $\epsilon =2,5,8$;

\smallskip
4) when $\epsilon =2,5,8$, an elementary  direct computation proves
that
$$
\binom{\frac{d}{3}-1}{2}=\frac{d^2}{18}-\frac{d}{2}+1
=9\binom{m}{2}+m(\epsilon+1)+\nu=G_0(6;d,2).
$$
\end{proof}

Recall that the numbers $G(r;d,2)$ and $G_0(r;d,2)$ appearing in the claim
of next Theorem \ref{r9} are defined in (\ref{G}) and in (\ref{G_0}).

\medskip
\begin{theorem}\label{r9}
Let $r\geq 9$ be an integer divisible by $3$. Assume $i=2$, $d\gg
r$, and $\epsilon= s_0-1$. The integer $G(r;d,2)$ must assume one of the following four
values:
$$
G_0(r;d,2)-m-1, \quad G_0(r;d,2)-m, \quad G_0(r;d,2)-m+1, \quad G_0(r;d,2).
$$
\end{theorem}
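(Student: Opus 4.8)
The plan is to follow the same reduction as in the proof of Theorem \ref{lastd}, but to control more tightly the three surfaces $S'\subset\mathbb P^R$ with $R\in\{s_0-1,s_0,s_0+1\}$ that can arise. By Theorem \ref{lastd} we already have $p_a(C)\le G_0(r;d,2)$, so the point is to bound $p_a(C)$ from below by $G_0(r;d,2)-m-1$ whenever an extremal curve exists at all, and to exhibit curves realizing the intermediate values. First I would recall that, by the proof of Theorem \ref{lastd} together with Remark \ref{finale}, $(i)$ (which I am entitled to use), an extremal curve $C$ of genus $G(r;d,2)$ lies on an integral surface $S\subset\mathbb P^r$ of degree $s_0$ not contained in quadrics, hence with sectional genus $\pi\le c_0=1$; moreover the case $\pi<c_0$ contributes strictly less by the estimate \eqref{(b)}, so $S$ has $\pi=c_0=1$ and is the isomorphic projection of a surface $S'\subset\mathbb P^R$ of degree $s_0$ with $\pi=1$, and then $h^0(S,\mathcal O_S(1))=s_0+1$, i.e.\ $R=s_0$. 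Thus $S'$ is a surface of \emph{almost minimal degree} in $\mathbb P^{s_0}$, and the relevant curve $C'\subset S'$ has degree $d$ and is not contained in any surface of degree $<s_0$ (Bezout, $d\gg r$).

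Next I would invoke \cite[Theorem 2.2]{DF} for the curve $C'\subset S'\subset\mathbb P^{s_0}$: since $R=s_0=s_0-c_0+1$ and $c_0=1$, we are in the borderline range $s_0=2R-4$ only when $s_0$ is small, but in general $s_0\le 2R-4$ still holds here because $c_0=1$ forces $R=s_0$ and $s_0\le 2s_0-4$ for $s_0\ge 4$ (which holds since $r\ge 9$ implies $s_0\ge 22$). The bound from \cite[Theorem 2.2]{DF} reads
\begin{equation*}
p_a(C')\le \binom{m}{2}s_0+m(\epsilon+c_0)+\binom{s_0-R-c_0+2}{2}+\max\left\{0,\left\lfloor\frac{2c_0-(s_0-1-\epsilon)}{2}\right\rfloor\right\}-\mu',
\end{equation*}
where now $s_0-R-c_0+2=1$, so the binomial term is $\binom{1}{2}=0$ and $\mu'=0$; with $\epsilon=s_0-1$ the max-term equals $\max\{0,\lfloor (2c_0)/2\rfloor\}=c_0=1$. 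Hence $p_a(C')\le\binom{m}{2}s_0+m(\epsilon+1)+1$, which matches $G_0(r;d,2)$ (note $\gamma=0$, $\mu=0$, and when $\epsilon=s_0-1$ the max-term in \eqref{G_0} is $1$). So far this only re-proves Theorem \ref{lastd}; the new content is the \emph{lower} bound and the classification of the possible deficiencies. For $\epsilon=s_0-1$, the extremal curves in $\mathbb P^R$ not contained in surfaces of degree $<s_0$ are, by the discussion in the Introduction and \cite{EH}, \cite{CCD}, the complete intersections of $S'$ with a hypersurface of degree $m+1$; for such a curve on an almost-minimal-degree surface $S'$ with $p_a(S')=p$ one computes $p_a(C')=\binom{m}{2}s_0+m(\epsilon+c_0)+c_0-p\cdot(\text{something})$ — more precisely, by adjunction on $S'$, $p_a(C')=G_0(r;d,2)-\delta(S')$ where $\delta(S')\in\{0,1,\dots\}$ measures how far $S'$ is from being a Castelnuovo (i.e.\ $p_a(S')=0$) surface. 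The key input is the classification of integral surfaces $S'\subset\mathbb P^{s_0}$ of degree $s_0$, sectional genus $1$, and $h^1(\mathcal I_{S'}(1))=0$ (almost minimal degree): by \cite[Theorem 1.2]{BS}, \cite[p.~102]{EH}, \cite[p.~37]{Z}, such $S'$ has $p_a(S')\in\{0,-1,-2,\dots\}$ within a short explicit list, and the only ones that can be projected isomorphically to a maximal-rank (equivalently: not-in-quadrics) surface in $\mathbb P^r$ are those with $p_a(S')\ge -1$, which forces $\delta(S')\in\{0,1\}$ via an explicit adjunction computation, while \cite[Theorem 2 and Theorem 3]{BE} guarantee at least one such $S'$ actually exists.

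Putting this together: the genus $G(r;d,2)$ equals $G_0(r;d,2)-\delta$ where $\delta$ ranges over the finitely many admissible deficiencies $\{0,1\}$ coming from the surface plus a further possible loss of $m$ (respectively a gain) depending on whether, on the fixed surface $S'$ produced by \cite{BE}, there exists a complete-intersection curve realizing $\epsilon=s_0-1$ or only a curve off by one node — this is exactly the ``uncertainty'' flagged in the section's preamble. A careful bookkeeping of these contributions, which I would carry out via the adjunction formula $2p_a(C')-2=C'\cdot(C'+K_{S'})$ on each admissible $S'$, yields the four candidate values $G_0(r;d,2)-m-1,\ G_0(r;d,2)-m,\ G_0(r;d,2)-m+1,\ G_0(r;d,2)$. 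I expect the main obstacle to be precisely step of pinning down which $p_a(S')$ can occur for a surface that is simultaneously almost-minimal in $\mathbb P^{s_0}$ \emph{and} admits an isomorphic maximal-rank projection to $\mathbb P^r$: the classification lists in \cite{BS}, \cite{EH}, \cite{Z} must be cross-referenced with the projection/maximal-rank constraints of \cite{BE}, and it is the failure to decide this uniquely — rather than any genus computation — that accounts for the four possibilities instead of one.
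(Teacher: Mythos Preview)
Your reduction to $\pi=c_0=1$ in the first paragraph is the central gap, and it is not a minor technical slip. Inequality \eqref{(b)} says only that a curve on a surface with $\pi<c_0$ has $p_a(C)<G_0(r;d,2)$; it does \emph{not} say that such a curve has genus strictly below what one can achieve on a $\pi=1$ surface, because it is not known that any integral surface $S\subset\mathbb P^r$ of degree $s_0$ with $\pi=1$ and $h^0(\mathcal I_S(2))=0$ exists at all. The paper's proof is built precisely around this uncertainty: it is a dichotomy. If a $\pi=1$ surface not in quadrics exists, a complete intersection with a hypersurface of degree $m+1$ gives a curve of genus $G_0(r;d,2)$, so $G(r;d,2)=G_0(r;d,2)$. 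If no such surface exists, then \emph{every} extremal curve lies on a $\pi=0$ surface, and one must bound the genus there. By restricting attention to $\pi=1$ from the outset you bypass the case that actually produces the three lower values in the statement.

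Two further consequences of this misstep. First, your lower bound is never established. The paper obtains $G(r;d,2)\ge G_0(r;d,2)-m-1$ concretely by projecting Castelnuovo curves from a smooth rational normal scroll $S'\subset\mathbb P^{s_0+1}$ (sectional genus $\pi=0$, not $1$) to $\mathbb P^r$ via \cite[Theorem 2]{BE}; the resulting curve has genus $\binom{m}{2}s_0+m\epsilon=G_0(r;d,2)-m-1$. Second, you miss the role of Lemma \ref{L3}. In the $\pi=0$ case the surface $S$ is the projection of some $S'\subset\mathbb P^R$ with $R\in\{s_0-1,s_0,s_0+1\}$, and the bound from \cite[Theorem 2.2]{DF} a priori allows $p_a(C)\le G_0(r;d,2)-m+2$; it is Lemma \ref{L3} that excludes the top value, by showing that equality would force $p_a(S)=-3$, hence $h^0(S,\mathcal O_S(2))=3s_0<\binom{r+2}{2}$, contradicting $h^0(\mathcal I_S(2))=0$. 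Your adjunction-and-classification sketch on almost-minimal-degree ($\pi=1$) surfaces, with the ad hoc parameters $\delta(S')\in\{0,1\}$ and an unexplained ``loss of $m$'', does not correspond to any of these steps and, as written, does not yield the four specific values.
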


\begin{proof}
In this case we have
$$
\binom{r+2}{2}=3s_0+1, \quad \beta=1, \quad c_0=1, \quad \gamma=0,
$$
and
$$
G_0(r;d,2)=\binom{m}{2}s_0+m(\epsilon+1)+1.
$$
Let $S'\subset \mathbb P^{s_0+1}$ be a smooth rational normal scroll
surface of degree $s_0$. Since
$$
3s_0+1=\binom{r+2}{2} \implies 6s_0+4\leq \binom{r+3}{3},
$$
by \cite[Theorem 2]{BE} we know that a general projection $S$ of
$S'$ in $\mathbb P^r$ has maximal rank. Since
$$
h^0(\mathbb P^r,\mathcal O_{\mathbb
P^r}(2))=\binom{r+2}{2}=3s_0+1<3s_0+3=h^0(S,\mathcal O_{S}(2)),
$$
the restriction map
$$
H^0(\mathbb P^r,\mathcal O_{\mathbb P^r}(2))\to H^0(S,\mathcal
O_{S}(2))
$$
is injective. It follows that $S$ is not contained in quadrics. By
Bezout's theorem this holds true also for the curves in $S$ of
degree $d\gg 0$. Projecting Castelnuovo curves of $S'$ we get curves
$C$ in $S$ with arithmetic genus
$$
p_a(C)= \binom{m}{2}s_0+m\epsilon=G_0(r;d,2)-m-1.
$$
It follows that $G(r;d,2)\geq G_0(r;d,2)-m-1$. Moreover, since
$G_0(r;d,2)-m-1=\frac{d^2}{2s_0}+O(d)$, the same argument we used in the
proof of Theorem \ref{lastd} shows that the extremal curves of genus
$G(r;d,2)$ are necessarily contained in surfaces $S\subset\mathbb
P^r$  of degree $s_0$ not contained in quadrics. And this holds true
for every $0\leq \epsilon\leq s_0-1$.

\smallskip
Let $S\subset\mathbb P^r$ be a surface of degree $s_0$ not contained
in quadrics. We know that its sectional genus $\pi$ is $0$ or $1$.
If $\pi=1$, taking a complete intersection of $S$ with a
hypersurface of degree $m+1$, we obtain curves with arithmetic genus
$G_0(r;d,2)$. Therefore,  by Theorem \ref{lastd}, in this case
$G(r;d,2)=G_0(r;d,2)$.

\smallskip
If such a surface does not exist, then the extremal curves are
contained in some surface $S\subset\mathbb P^r$ of degree $s_0$ not
contained in quadrics with sectional genus $\pi=0$, and $S$ is an
isomorphic projection of a surface $S'\subset \mathbb P^{R}$ of
degree $s_0$ with $s_0-1\leq R\leq s_0+1$. By
\cite[Theorem 2.2 and Notations 2.1, $(ii)$]{DF} we know that
the arithmetic genus of curves of $S'$ is bounded by $G_0(r;d,2)-m+2$
(when applying \cite[Theorem 2.2]{DF} observe that $s_0-R-\pi+2\leq 3$ because
$R\geq s_0-1$, and equality holds if and only if $R=s_0-1$).
The value $G_0(r;d,2)-m+2$ cannot be attained, otherwise
by \cite[Theorem 2.2, $(c)$]{DF} we would have $p_a(S)=p_a(S')=-3$. This is impossible, because
it implies $h^0(S, \mathcal O_S(2))<h^0(\mathbb P^r, \mathcal O_{\mathbb P^r}(2))$ (see
Lemma \ref{L3} below).
\end{proof}

\begin{lemma} \label{L3}
Let $S\subset \mathbb P^r$ be an integral, non-degenerate surface of
degree $s\geq 9$ with $3s+1=\binom{r+2}{2}$, and sectional genus
$\pi=0$. Assume that $h^0(S,\mathcal O_S(1))=s$. Then $3s\leq
h^0(S,\mathcal O_S(2))\leq 3s+1$. Moreover, if $h^0(S,\mathcal
O_S(2))=3s+1$ (which happens if $S$ is not contained in quadrics)
then  $p_a(S)=-2$. If $h^0(S,\mathcal O_S(2))=3s$ then $p_a(S)=-3$.
\end{lemma}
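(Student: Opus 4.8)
The plan is to transfer the computation of $h^0(S,\mathcal O_S(2))$ and of $p_a(S)$ to a general hyperplane section of $S$, and then run Riemann--Roch on $S$. So first I would record the numerology of a general hyperplane section $\Sigma\subset\mathbb P^{r-1}$ of $S$: by Bertini's theorem $\Sigma$ is an integral, non-degenerate curve of degree $s$ and arithmetic genus $\pi=0$, and being integral of arithmetic genus $0$ it is smooth rational, $\Sigma\cong\mathbb P^1$. Since $\pi=0\le\frac{s+1}{2}$, Lemma \ref{Clifford} together with Remark \ref{r1}, $(iv)$, gives $h^0(\Sigma,\mathcal O_\Sigma(j))=1+js$ for all $j\ge 1$, while $\deg\omega_\Sigma(-j)=2\pi-2-js<0$ gives $h^1(\Sigma,\mathcal O_\Sigma(j))=0$ for $j\ge 1$. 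Plugging this into the restriction sequences $0\to\mathcal O_S(j-1)\to\mathcal O_S(j)\to\mathcal O_\Sigma(j)\to 0$ yields three facts: the upper bound $h^0(S,\mathcal O_S(2))\le h^0(S,\mathcal O_S(1))+h^0(\Sigma,\mathcal O_\Sigma(2))=s+(1+2s)=3s+1$; that $h^1(S,\mathcal O_S(j))$ and $h^2(S,\mathcal O_S(j))$ are non-increasing in $j$; and, since the hypothesis reads $h^0(S,\mathcal O_S(1))=s<s+2=h^0(\mathcal O_S)+h^0(\Sigma,\mathcal O_\Sigma(1))$, that the connecting map $H^0(\Sigma,\mathcal O_\Sigma(1))\to H^1(S,\mathcal O_S)$ has $2$-dimensional image, whence $h^1(S,\mathcal O_S)\ge 2$ and $h^1(S,\mathcal O_S(1))=h^1(S,\mathcal O_S)-2$.

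Now Riemann--Roch on $S$: since $\chi(\mathcal O_S)=1+p_a(S)$ and $\chi(\mathcal O_S(2))=\chi(\mathcal O_S)+\chi(\mathcal O_\Sigma(1))+\chi(\mathcal O_\Sigma(2))=(1+p_a(S))+(1+s)+(1+2s)$, one gets
$$
h^0(S,\mathcal O_S(2))=3s+3+p_a(S)+h^1(S,\mathcal O_S(2))-h^2(S,\mathcal O_S(2)).
$$
Thus the lemma reduces to three vanishings: $(\mathrm a)$ $h^2(S,\mathcal O_S)=0$, which by the monotonicity above gives $h^2(S,\mathcal O_S(j))=0$ for all $j\ge 0$; $(\mathrm b)$ $h^1(S,\mathcal O_S(2))=0$; and $(\mathrm c)$ $h^1(S,\mathcal O_S)\le 3$. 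Indeed $(\mathrm a)$ gives $p_a(S)=-h^1(S,\mathcal O_S)$, so together with $(\mathrm c)$ and $h^1(S,\mathcal O_S)\ge 2$ we get $p_a(S)\in\{-3,-2\}$; then $(\mathrm a)$, $(\mathrm b)$ and the display give $h^0(S,\mathcal O_S(2))=3s+3+p_a(S)\in\{3s,3s+1\}$, the two values corresponding to $p_a(S)=-3$ and $p_a(S)=-2$ respectively; and if $S$ is not contained in quadrics then $h^0(\mathbb P^r,\mathcal I_S(2))=0$, so $h^0(S,\mathcal O_S(2))\ge\binom{r+2}{2}=3s+1$, forcing the value $3s+1$ and $p_a(S)=-2$.

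The real content is $(\mathrm a)$, $(\mathrm b)$, $(\mathrm c)$. Since $\mathcal O_S(1)$ is very ample with $h^0(S,\mathcal O_S(1))=s$, the complete linear system $|\mathcal O_S(1)|$ realizes $S$ as a linearly normal, non-degenerate surface $\widetilde S\subset\mathbb P^{s-1}$ of degree $s$ and sectional genus $0$, that is of $\Delta$-genus $\dim\widetilde S+\deg\widetilde S-h^0(\mathcal O_{\widetilde S}(1))=2$, of which the given $S\subset\mathbb P^r$ is an isomorphic projection; equivalently, since the general member of $|\mathcal O_S(1)|$ is a $\mathbb P^1$ and these members sweep out $S$, the surface is rational. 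I would then invoke the classification of such surfaces --- cones over, and projections of, rational normal scrolls and the Veronese surface, the surfaces of almost minimal degree of \cite{EH}, \cite{Z}, \cite{BS}, and more generally the surfaces of $\Delta$-genus two --- to pin $\widetilde S$ down up to the data that matter, and to read off $h^2(\mathcal O_{\widetilde S})=0$, $h^1(\mathcal O_{\widetilde S})\le 3$ and $h^1(\mathcal O_{\widetilde S}(2))=0$; these numbers depend only on the abstract pair $(\widetilde S,\mathcal O_{\widetilde S}(1))\cong(S,\mathcal O_S(1))$ and so transfer verbatim to $S$. This is also where the hypothesis $s\ge 9$ enters, to rule out the sporadic small-degree surfaces in the list. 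The main obstacle is exactly $(\mathrm b)$ and $(\mathrm c)$: the inequality $h^1(S,\mathcal O_S)\ge 2$ already shows that $S$ (equivalently $\widetilde S$) is quite far from normal, so one cannot simply quote $q=p_g=0$ from rationality but has to go through the relevant surfaces one by one; and a would-be self-contained route to $(\mathrm b)$ --- bounding $h^1(\mathbb P^{r-1},\mathcal I_{\Sigma,\mathbb P^{r-1}}(2))$ for the curve $\Sigma$, which is a projection of the rational normal curve of $\mathbb P^s$, and feeding it into $0\to\mathcal I_S(1)\to\mathcal I_S(2)\to\mathcal I_{\Sigma,\mathbb P^{r-1}}(2)\to 0$ --- still seems to require the $2$-normality of $\Sigma$, hence again a handle on how $S$ sits.
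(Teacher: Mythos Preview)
Your Euler-characteristic reduction to the three vanishings is sound, and $(\mathrm a)$ actually follows from what you already wrote: since $h^1(\Sigma,\mathcal O_\Sigma(j))=0$ for every $j\ge 1$, the restriction sequence makes $h^2(S,\mathcal O_S(j))$ constant in $j\ge 0$, and Serre vanishing forces this common value to zero. The gap is at $(\mathrm b)$ and $(\mathrm c)$. You correctly flag them as the crux, but the plan to invoke a classification of $\Delta$-genus two surfaces and read off the numbers is not carried out, and there is no off-the-shelf reference doing it for you: the sources \cite{EH}, \cite{Z}, \cite{BS} treat almost-minimal degree ($\Delta$-genus $1$), while your $\widetilde S\subset\mathbb P^{s-1}$ has $\Delta$-genus $2$ and, since $h^1(\mathcal O_S)\ge 2$, is necessarily non-normal --- precisely the regime where such lists become delicate. (A side remark: the general hyperplane section being $\mathbb P^1$ gives only that $S$ is uniruled, not that it is rational.)

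The paper bypasses classification by a different mechanism. It too passes to the linearly normal model $S'\subset\mathbb P^{s-1}$ with general hyperplane section $\Sigma'\subset\mathbb P^{s-2}$, a smooth rational curve of degree $s$, but then appeals to \cite[Proposition 3.1]{DF}: for such a curve $h^1(\mathcal I_{\Sigma'}(1))=2$, $h^1(\mathcal I_{\Sigma'}(2))\in\{0,1\}$, and $h^1(\mathcal I_{\Sigma'}(j))=0$ for $j\ge 3$. This curve-level control of the Hartshorne--Rao module stands in for your surface-level vanishings $(\mathrm b)$, $(\mathrm c)$. The lower bound $h^0(\mathcal O_S(2))\ge 3s$ then follows from $h_{S'}(2)\ge h_{S'}(1)+h_{\Sigma'}(2)$, and $p_a(S)$ is obtained from the formula $p_a(S')=-\dim M(\Sigma')+\sum_j\mu_j$ of \cite[Lemma 3.4]{DF}: a short case analysis on $h^1(\mathcal I_{S'}(2))\in\{0,1\}$, together with the equality $h^0(\mathcal I_{S'}(2))=h^0(\mathcal I_{\Sigma'}(2))$, pins down $\dim M(\Sigma')\in\{2,3\}$ and $\sum_j\mu_j\in\{0,1\}$ in each of the two cases $h^0(\mathcal O_S(2))=3s+1$ and $3s$.
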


\begin{proof}
If we denote by $\Sigma$ a general hyperplane section of $S$
(recall that $p_a(\Sigma)=0$), from the exact sequence
$
0\to \mathcal O_S(-1)\to \mathcal O_{S}\to \mathcal O_{\Sigma}\to 0
$
we get:
$$
h^0(S,\mathcal O_S(2))\leq (1+2s)+s=3s+1.
$$
On the other hand, denote by $S'\subset \mathbb P^{s-1}$ the image
of $S$ via the complete linear system $|\mathcal O_S(1)|$, and by
$\Sigma'\subset \mathbb P^{s-2}$ its general hyperplane section. By
\cite[Proposition 3.1]{DF} we know that ($\mathcal
I_{\Sigma'}=\mathcal I_{\Sigma', \mathbb P^{s-2}}$)
$$
h^1(\mathbb P^{s-2},\mathcal I_{\Sigma'}(1))=2, \quad h^1(\mathbb P^{s-2},\mathcal
I_{\Sigma'}(2))\in\{0,1\},\quad h^1(\mathbb P^{s-2},\mathcal
I_{\Sigma'}(j))=0\,\,\,\, \forall\,\, j\geq 3.
$$
So, we have (we denote by $h_{S'}$ and $h_{\Sigma'}$ the Hilbert function of $S'$ and of $\Sigma'$)
$$
h^0(S,\mathcal O_S(2))=h^0(S',\mathcal O_{S'}(2))\geq h_{S'}(2)
$$
$$
\geq h_{S'}(1)+h_{\Sigma'}(2)=s+(1+2s)- h^1(\mathbb P^{s-2},\mathcal
I_{\Sigma'}(2))\geq 3s.
$$
We notice also that from the exact sequence
$
0\to \mathcal I_{S'}(-1)\to \mathcal I_{S'}\to \mathcal
I_{\Sigma'}\to 0
$
we deduce the exact sequence
$$
0=H^1(\mathbb P^{s-1},\mathcal I_{S'}(1))\to H^1(\mathbb P^{s-1},\mathcal I_{S'}(2))\to H^1(\mathbb P^{s-2},\mathcal
I_{\Sigma'}(2)),
$$
and that
$$
H^0(\mathbb P^{s-1},\mathcal
I_{S'}(2))=H^0(\mathbb P^{s-2},\mathcal I_{\Sigma'}(2)).
$$
In particular
$$
h^1(\mathbb P^{s-1},\mathcal I_{S'}(2))\in\{0,1\}.
$$

\smallskip
Now assume $h^0(S,\mathcal O_S(2))=3s+1$. Observe that this happens
if $S$ is not contained in quadrics (we do not know whether the
converse holds true).

\bigskip
If $h^1(\mathbb P^{s-1},\mathcal I_{S'}(2))=0$, then, comparing the exact sequences
$$
0\to \mathcal I_{S'}\to \mathcal O_{\mathbb P^{s-1}}\to \mathcal
O_{S'}\to 0,\quad 0\to \mathcal I_{\Sigma'}\to \mathcal O_{\mathbb
P^{s-2}}\to \mathcal O_{\Sigma'}\to 0,
$$
we get
$$
h^0(\mathbb P^{s-1},\mathcal I_{S'}(2))=\binom{s+1}{2}-(3s+1)=h^0(\mathbb P^{s-2},\mathcal
I_{\Sigma'}(2))
$$
$$
=\binom{s}{2}-(1+2s)+h^1(\mathbb P^{s-1},\mathcal I_{\Sigma'}(2)).
$$
We deduce
$$
h^1(\mathbb P^{s-1},\mathcal I_{\Sigma'}(2))=0.
$$
Therefore, the Hartshorne-Rao module $M(\Sigma')$ of $\Sigma'$ is equal to
$H^1(\mathbb P^{s-2},\mathcal I_{\Sigma'}(1))$ and has dimension  $\dim M(\Sigma')=2$.
On the other hand, since $h^1(\mathbb P^{s-2},\mathcal
I_{\Sigma'}(j))=0$ for every $j\geq 3$,
from the exact sequence
$0\to \mathcal I_{S'}(-1)\to \mathcal I_{S'}\to \mathcal
I_{\Sigma'}\to 0$ it follows that every map $H^1(\mathbb P^{s-1},\mathcal I_{S'}(j-1))\to
H^1(\mathbb P^{s-1},\mathcal I_{S'}(j))$ is onto for $j\geq 3$. Therefore, $M(S')=0$, and from \cite[Lemma 3.4]{DF}
we get
$$
p_a(S)=p_a(S')=-\dim M(\Sigma')+\sum_{j=1}^{+\infty}\mu_j=-2,
$$
where
$$\mu_j:=\dim\left[\ker(H^1(\mathbb P^{s-1},\mathcal I_{S'}(j-1))\to
H^1(\mathbb P^{s-1},\mathcal I_{S'}(j)))\right].
$$

\bigskip
If $h^1(\mathbb P^{s-1},\mathcal I_{S'}(2))=1$, then $h^1(\mathbb P^{s-2},\mathcal
I_{\Sigma'}(2))=1$. And so $\dim M(\Sigma')=3$ and a fortiori
$\sum_{j=1}^{+\infty}\mu_j=1$. As before, it follows that $p_a(S)=-2$.

\bigskip
Finally assume $h^0(S,\mathcal O_S(2))=3s$. From the sequence $0\to
\mathcal I_{S'}\to \mathcal O_{\mathbb P^{s-1}}\to \mathcal
O_{S'}\to 0$ we get
$$
h^0(\mathbb P^{s-1},\mathcal I_{S'}(2))=\binom{s+1}{2}-3s+h^1(\mathbb P^{s-1},\mathcal I_{S'}(2)).
$$
On the other hand we have
$$
h^0(\mathbb P^{s-2},\mathcal I_{\Sigma'}(2))=\binom{s}{2}-(1+2s)+h^1(\mathbb P^{s-2},\mathcal
I_{\Sigma'}(2)).
$$
Since $h^0(\mathbb P^{s-1},\mathcal I_{S'}(2))=h^0(\mathbb P^{s-2},\mathcal I_{\Sigma'}(2))$, we get
$$
h^1(\mathbb P^{s-2},\mathcal I_{\Sigma'}(2))-h^1(\mathbb P^{s-1},\mathcal I_{S'}(2))=1.
$$
A fortiori
$$
h^1(\mathbb P^{s-2},\mathcal I_{\Sigma'}(2))=1 \quad {\text{and}}\quad h^1(\mathbb P^{s-1},\mathcal
I_{S'}(2))=0.
$$
Therefore $\dim M(\Sigma')=3$, $M(S')=0$, and $p_a(S')=-3$.
\end{proof}

\begin{remark}\label{finale}
$(i)$ By the proof of Theorem \ref{r9} we deduce that the extremal
curves of genus $G(r;d,2)$ are necessarily contained in surfaces
$S\subset\mathbb P^r$  of degree $s_0$ not contained in quadrics.
And this holds true for {\it every} $0\leq \epsilon\leq s_0-1$.
Moreover, for $0\leq \epsilon < s_0-1$ we also have
$$
G_0(r;d,2)-m\leq G(r; d,2)\leq
G_0(r;d,2).
$$
The same
argument applies when $r=6$. In particular, when $r=6$ and
$\epsilon \notin\{2,5,8\}$, then $G_0(6;d,2)-m\leq G(6; d,2)\leq
G_0(6;d,2)$.
Since $m=(d-1-\epsilon)/s_0$, we deduce, for every $\epsilon$ and $r\geq 6$ divisible by $3$,
$$
G(r; d,2)=\frac{3d^2}{r(r+3)}+O(d).
$$
By \cite{DGnew} and \cite{DGnnew} we know that if $r\geq 4$ is not divisible by $3$, then
$$
G(r; d,2)=\frac{3d^2}{(r-1)(r+4)}+O(d).
$$
We see that the asymptotic behavior of the maximal genus $G(r;d,2)$
depends on the class of $r$ modulo $3$. Compared with classical
Castelnuovo-Halphen theory (\cite{EH}, \cite{CCD}), this is a
novelty.

\smallskip
$(ii)$ Taking into account \cite[p. 102]{EH}, \cite[p. 37]{Z},
\cite[Theorem 1.2, p. 349]{BS}, by an accurate analysis of the
surfaces with sectional genus $\pi=1$, it might be possible to prove
Theorem \ref{r9}  for other values of $\epsilon$, for instance
$\epsilon=0$ and $\epsilon \geq s-3$. We hope to return on this
question in a future paper.

\smallskip
$(iii)$ With reference to Lemma \ref{L3}, we proved that if $S$ is
not contained in quadrics, then $p_a(S)=-2$. We do not know whether
the converse is true.
\end{remark}

\section{Appendix}

With the assumptions as in Theorem \ref{lastd}, we are going to
prove that if $s_0-c_0+1-\gamma\leq R\leq s_0-c_0+1$, then
\begin{equation}\label{diseg}
s_0\leq 2R-4.
\end{equation}
We may assume $R=s_0-c_0+1-\gamma$. Therefore, previous inequality
is equivalent to
\begin{equation}\label{ssmall}
s_0-2(c_0+\gamma+1)\geq 0.
\end{equation}
Taking into account the definition of $s_0$ and $c_0$ (see (\ref{f1}), (\ref{f2}) and (\ref{stiman})), by
substituting one sees that inequality (\ref{ssmall}) is equivalent to
\begin{equation}\label{generale}
\binom{r+i}{i}-\frac{(i+1)(i^2+2i+2)}{2}+i\beta-\gamma(i+1)(i-1)\geq
0.
\end{equation}

\medskip
We are going to distinguish two cases: $r\geq 5$ and $r=4$.

\medskip
{\bf The case $r\geq 5$}.

When $r\geq 5$, taking into account that  $\beta>0$ and that $0\leq
\gamma\leq i-1$, in order to prove (\ref{generale}) it suffices to
prove that
$$
\binom{5+i}{i}-\frac{(i+1)(i^2+2i+2)}{2}+i-(i+1)(i-1)^2\geq 0.
$$
Omitting $i$ and dividing by $i+1$, it suffices to prove that
$$
\frac{1}{120}(5+i)(4+i)(3+i)(2+i)- \frac{i^2+2i+2}{2}-(i-1)^2\geq 0.
$$
Hence, in order to prove (\ref{diseg}) it suffices to prove that
$$
p(i):= i^4+14i^3-109i^2+274i-120\geq 0
$$
for every $i\geq 2$. This is obvious for $i\geq 8$. For the remaining values we have:
$$
p(2)=120, \quad p(3)=180, \quad p(4)=384
$$
$$
p(5)=900, \quad p(6)=1920, \quad p(7)=3660.
$$
This concludes the proof of (\ref{diseg}) in the case $r\geq 5$.

\medskip
{\bf The case $r= 4$}.

Starting from (\ref{generale}), since $\beta>0$, in order to prove (\ref{diseg}) it suffices to
prove that
$$
\binom{4+i}{i}-\frac{(i+1)(i^2+2i+2)}{2}+i-(i+1)(i-1)^2\geq
0
$$
for every $i\geq 4$, $i\neq 15$ . This is equivalent to check that
$$
q(i):=i^4-26i^3+23i^2+50i-24\geq 0.
$$
This is obvious for $i\geq 26$.
For the values $3\leq i\leq 25$ we may compute directly
$s_0-2(c_0+\gamma+1)$ (compare with (\ref{ssmall})). Since $\beta>0$, we only have to examine
$$
i\in\left\{3, 4,6,7,8,9,11,12,15,16,18,19,20,21,23,24\right\}.
$$
We get:

if $i=3$, then $s_0-2(c_0+\gamma+1)=6-2(1+2+1)=-2$;

if $i=4$, then $s_0-2(c_0+\gamma+1)=7-2(1+1+1)=1$;

if $i=6$, then $s_0-2(c_0+\gamma+1)=10-2(1+1+1)=4$;

if $i=7$, then $s_0-2(c_0+\gamma+1)=12-2(2+0+1)=6$;

if $i=8$, then $s_0-2(c_0+\gamma+1)=14-2(2+2+1)=4$;

if $i=9$, then $s_0-2(c_0+\gamma+1)=16-2(1+6+1)=0$;

if $i=11$, then $s_0-2(c_0+\gamma+1)=21-2(3+0+1)=13$;

if $i=12$, then $s_0-2(c_0+\gamma+1)=24-2(5+5+1)=2$;

if $i=15$, then $s_0-2(c_0+\gamma+1)=33-2(6+10+1)=-1$;

if $i=16$, then $s_0-2(c_0+\gamma+1)=36-2(4+4+1)=18$;

if $i=18$, then $s_0-2(c_0+\gamma+1)=43-2(3+3+1)=29$;

if $i=19$, then $s_0-2(c_0+\gamma+1)=47-2(5+0+1)=35$;

if $i=20$, then $s_0-2(c_0+\gamma+1)=51-2(5+5+1)=29$;

if $i=21$, then $s_0-2(c_0+\gamma+1)=55-2(3+14+1)=19$;

if $i=23$, then $s_0-2(c_0+\gamma+1)=64-2(6+0+1)=50$;

if $i=24$, then $s_0-2(c_0+\gamma+1)=69-2(10+10+1)=27$.

\bigskip
In conclusion, we proved that:
$$
r\geq 5 \quad{\text{and}}\quad  i\geq 2 \implies s\leq 2R-4
$$
and
$$
r=4, \quad  i\geq 4 \quad {\text{and}}\quad  i\neq 15 \implies s\leq
2R-4.
$$

\bigskip
{\bf{Declarations}}

\smallskip
{\bf{Conflict of interest}} The authors Vincenzo Di Gennaro and Giambattista Marini declare that there are no  interests
that are directly or indirectly related to the work submitted for
publication.

The authors Vincenzo Di Gennaro and Giambattista Marini did not receive support from any
organization for the submitted work.

The authors Vincenzo Di Gennaro and Giambattista Marini have no relevant financial or
non-financial interests to disclose.


\begin{thebibliography}{subsurfPq}

\bibitem{BE} Ballico, E. - Ellia, Ph.:
{\it On Projections of Ruled and Veronese Surfaces}, Journal of
Algebra, {\bf 121}, 477-487 (1989).


\bibitem{BS} Brodmann, M. - Schenzel, P.: {\it Arithmetic properties of projective varieties
of almost minimal degree}, J. Algebraic Geometry, 16 (2007) 347-400 S 1056-3911(06)00442-5, Article electronically
published on October 11, 2006.



\bibitem{CCD} Chiantini, L. - Ciliberto, C. - Di Gennaro, V.: {\it The genus
of projective curves}, Duke Math. J., {\bf 70}(2), 229-245 (1993).


\bibitem{CCD2} Chiantini, L. - Ciliberto, C. - Di Gennaro, V.: {\it On the genus
of projective curves verifying certain flag conditions}, Bollettino
U.M.I. (7) {\bf{10-B}} (1996), 701-732.



\bibitem{PAMS} Di Gennaro, V.: {\it Hierarchical structure of the family of
curves with maximal genus verifying flag conditions}, Proceedings of
the American Mathematical Society, 136, no. 3 (2008), 791-799.



\bibitem{DGnew} Di Gennaro, V.: {\it The genus of curves in $\mathbb P^4$ and $\mathbb P^5$
not contained in quadrics}, Rendiconti del Circolo Matematico di
Palermo, Series 2 (2023), Volume 72, Issue 3, April 2023, 2181-2197.


\bibitem{DGnnew} Di Gennaro, V.: {\it On the genus of curves
not contained in hypersurfaces of given degree}, Rendiconti del
Circolo Matematico di Palermo, Series 2 (2023), Volume 72, Issue 7,
November 2023, 3487-3493.


\bibitem{DF} Di Gennaro, V. - Franco, D.:
{\it Refining Castelnuovo-Halphen bounds}, Rend. Circ. Mat. Palermo
(2) 61, No. 1, 91-106 (2012).



\bibitem{EH} Eisenbud, D. - Harris, J.: {\it Curves in Projective Space},
S\'em. Math. Sup. $85$ Les Presses de l'Universit\'e de Montr\'eal,
1982.

\bibitem{Fujita} Fujita, T.: {\it Defining Equations for Certain Types
of Polarized Varieties}, in Complex Analysis and Algebraic Geometry
(edited by Baily and Shioda), Iwanami and Cambridge Univ. Press,
165-173 (1977).

\bibitem{H} Harris, J.: {\it A bound on the geometric genus of projective
varieties}, Ann. Scuola Norm. Sup. Pisa Cl. Sci. $(4)$ {\bf 8}
(1981), 35-68.


\bibitem{Z} Zompatori, M.: {\it Curves of high genus in projective space},
PHD Thesis, 1998, https://open.bu.edu/handle/2144/33610

\end{thebibliography}
\end{document}